\newcommand{\QQ}{\mathbb{Q}}
\newcommand{\ZZ}{\mathbb{Z}}
\newcommand{\CC}{\mathbb{C}}
\newcommand{\GG}{\mathbb{G}}
\DeclareMathOperator{\GL}{GL}
\DeclareMathOperator{\OO}{O}
\DeclareMathOperator{\Sp}{Sp}
\DeclareMathOperator{\SO}{SO}
\DeclareMathOperator{\Id}{Id}
\DeclareMathOperator{\Ad}{Ad}
\newcommand{\Grp}{\mathbf{AlgGrp}}
\DeclareMathOperator{\sgn}{sgn}
\DeclareMathOperator{\Hom}{Hom}
\DeclareMathOperator{\End}{End}
\DeclareMathOperator{\Pf}{Pf}
\newcommand{\kk}{\Bbbk}
\newcommand{\Gm}{\GG_{\mathrm{m}}}
\begin{document}

\title*{The degree of $\SO(n)$}
\author{Madeline Brandt, DJ Bruce, Taylor Brysiewicz, Robert Krone, Elina Robeva}

\institute{Madeline Brandt \at Department of Mathematics, University of California, Berkeley, 970 Evans Hall, Berkeley, CA, 94720 \email{brandtm@berkeley.edu}
\and DJ Bruce \at Department of Mathematics, University of Wisconsin, 480 Lincoln Drive, Madison, WI, 53706 \email{djbruce@math.wisc.edu} 
\and Taylor Brysiewicz \at Department of Mathematics, Texas A\&M University, 155 Ireland St, College Station, TX 77840 \email{tbrysiewicz@math.tamu.edu}
\and Robert Krone \at Department of Mathematics and Statistics, Queen's University, 48 University Avenue, Kingston, ON, K7L 3N6 \email{rk71@queensu.ca}
\and Elina Robeva \at Department of Mathematics, Massachusetts Institute of Technology, 77 Massachusetts Avenue, Cambridge, MA 02139 \email{erobeva@mit.edu}
}
%
%
\maketitle

\abstract*{We provide a closed formula for the degree of $\SO(n)$ over an algebraically closed field of characteristic zero. In addition, we describe symbolic and numerical techniques which can also be used to compute the degree of $\SO(n)$ for small values of $n$.
 As an application of our results, we give a formula for the number of critical points of a low-rank semidefinite programming optimization problem. Finally, we provide some evidence for a conjecture regarding the real locus of $\SO(n)$. }

\abstract{We provide a closed formula for the degree of $\SO(n)$ over an algebraically closed field of characteristic zero. In addition, we describe symbolic and numerical techniques which can also be used to compute the degree of $\SO(n)$ for small values of $n$. As an application of our results, we give a formula for the number of critical points of a low-rank semidefinite programming optimization problem. 
Finally, we provide some evidence for a conjecture regarding the real locus of $\SO(n)$.}

\section{Introduction}
\label{introsect}

The \emph{special orthogonal group} $\SO(n,\mathbb{R})$ is the group of automorphisms of $\mathbb{R}^n$ which preserve the standard inner product and have determinant equal to one.
The complex special orthogonal group is the complexification of the special orthogonal group and can be thought of more explicitly as the group of matrices
\[
\SO(n):=\SO(n,\CC)=\left\{M\in \text{Mat}_{n,n}(\CC) \; | \; \det M =1, \quad M^{t}M=\text{Id}\right\}.
\]
As these conditions are polynomials in the entries of such a matrix, we view $\SO(n)$ as a complex variety. 

Recall that the degree of a complex variety $X$ is the generic number of intersection points of $X$ with a linear space of complementary dimension. Problem $4$ on Grassmannians in \cite{fitness} asks for a formula for the degree of the of $\SO(n)$.
Our primary result is the following theorem, which answers this question completely.

\begin{restatable}{theorem}{maintheorem}
\label{degson}
The degree of $\SO(n)$ is given by
\begin{equation*}
\deg \SO(n)=2^{n-1}\det \left( {2n-2i-2j}\choose{n-2i} \right)_{1\leq i , j \leq \lfloor{\frac n 2}\rfloor}. 
\end{equation*}
\end{restatable}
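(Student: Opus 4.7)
The plan is to exploit the fact that $\SO(n)$ is a connected reductive algebraic group, and to apply the general formula (due to Kazarnovskii, refined by Brion) for the degree of a reductive group in its natural linear embedding. Their formula expresses $\deg G$ for $G \subset \GL(V)$ as a suitably normalized integral of a polynomial coming from the Weyl denominator over the weight polytope of $V$. For $\SO(n)$ in the standard representation $V = \CC^n$, the weight polytope is the cross-polytope on $\pm e_1, \ldots, \pm e_m$ (with $m = \lfloor n/2 \rfloor$), and the relevant root system is of type $B_m$ for $n$ odd and $D_m$ for $n$ even.

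Concretely, I would first write down the Kazarnovskii integral explicitly and apply the Weyl integration formula to reduce it to a sum of monomial integrals over the positive Weyl chamber, each weighted by the square of the Weyl denominator, which is a Vandermonde-type expression in the coordinates $x_1,\ldots,x_m$. The factor of $2^{n-1}$ should arise from the index of the root lattice in the weight lattice, equivalently from the kernel of the isogeny $\mathrm{Spin}(n) \to \SO(n)$, together with the contribution distinguishing $\SO(n)$ from the second component of $\OO(n)$. Verifying that the small cases $n=2,3,4$ give $2$, $8$, and $40$ respectively would serve as an early sanity check.

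The main obstacle is the final combinatorial step: converting the resulting integral or sum into the specific determinant of binomial coefficients appearing in the theorem. The natural tool is the Lindström-Gessel-Viennot lemma, whereby $\binom{2n-2i-2j}{n-2i}$ is interpreted as counting lattice paths from a source indexed by $i$ to a sink indexed by $j$, so that the determinant counts tuples of non-intersecting paths. Matching these path counts to the Weyl-group sum will likely require a Jacobi-Trudi style identity or a direct sign-reversing involution, and the parities of $n$ (corresponding to type $B_m$ versus $D_m$) must be handled separately. If the Kazarnovskii route proves unwieldy, an alternative is to use the Cayley parametrization $A \mapsto (I-A)(I+A)^{-1}$ birationally identifying $\SO(n)$ with skew-symmetric matrices and to count intersections via a residue or Bezout-style argument on the Cayley compactification, but the same determinantal identity would still need to be extracted combinatorially.
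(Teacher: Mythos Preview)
Your overall strategy---start from Kazarnovskij's formula for the standard representation, expand the squared Weyl denominator as a Vandermonde-type sum, and integrate monomial by monomial over the weight polytope---is exactly the route the paper takes. But two aspects of your plan are off and would cost you time if you pursued them.

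First, the provenance of the factor $2^{n-1}$. It does \emph{not} come from the isogeny $\mathrm{Spin}(n)\to\SO(n)$ or any root/weight lattice index: the kernel appearing in Kazarnovskij's formula is that of the representation $\rho$, and for the standard embedding $\SO(n)\hookrightarrow\GL_n$ this kernel is trivial. The power of $2$ emerges purely from the bookkeeping---the Weyl group order, the product of Coxeter exponent factorials, and the factor $2^r$ obtained by folding the cross-polytope integral onto the standard simplex (not the Weyl chamber; the integrand is even in each $x_i$, so the correct fold is to the positive-orthant simplex $\Delta_r$). If you chase an isogeny explanation you will not find one.

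Second, and more important, you have inverted the role of Lindstr\"om--Gessel--Viennot. In the paper the determinant of binomials is \emph{not} obtained via LGV; rather, after writing $\prod_{i<j}(x_i^2-x_j^2)^2$ as a double sum over $\sigma,\tau\in S_r$ and applying the exact formula $\int_{\Delta_r} x^{a} = \prod a_i!/(r+\sum a_i)!$, the resulting expression $\sum_{\sigma,\tau}\sgn(\sigma\tau)\prod_i (2\sigma(i)+2\tau(i)+c)!$ collapses---via the substitution $\rho=\tau\sigma^{-1}$---to $r!$ times a single Leibniz sum, i.e.\ a determinant of factorials. Row and column operations then convert factorials to binomials. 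LGV appears only afterwards, as a combinatorial interpretation of the already-proved formula. So the ``main obstacle'' you anticipate (matching path counts to a Weyl-group sum, or a Jacobi--Trudi identity) does not arise: the determinant is the Leibniz expansion itself. The step you are underweighting is the explicit simplex-monomial integral and the $\sigma,\tau\mapsto\rho$ trick; once you have those, the argument for both parities is a short calculation.
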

Our proof of Theorem \ref{degson} uses a formula of Kazarnovskij \cite{kazarnovskii} (see also Theorem~\ref{Kazarnovskij}) for the degree of the image of a representation of a connected, reductive, algebraic group over an algebraically closed field. By applying this formula to the case of the standard representation of $\SO(n)$ we are able to express the degree of $\SO(n)$ in terms of its root data and other invariants.

In addition to this result, Theorem \ref{non-intersecting} provides a combinatorial interpretation of this degree in terms of non-intersecting lattice paths. In contrast to Theorem \ref{degson}, the combinatorial statement has the immediate benefit of being obviously non-negative. 

\begin{remark}
Let $\kk$ be a field of characteristic zero. We can define $\SO(n,\kk)$ using the same system of equations since they are defined over the prime field $\QQ$. For $\kk$ that is not algebraically closed, the degree of a variety can be defined in terms of the Hilbert series of its coordinate ring. Since the Hilbert series does not depend on the choice of $\kk$, the degree does not either. We choose to work over $\CC$ not only for simplicity, but also so that we may use the above definition of degree.
\end{remark}

\begin{remark}
Our methods are not specific to $\SO(n)$.  The same approach can be used to compute the degree of other algebraic groups. For example, toward the end of Section~\ref{sec:degson} we provide a similar closed formula for the degree of the symplectic group. This formula is also interpreted combinatorially in Section~\ref{NILP}.
\end{remark}

In order to verify Theorem \ref{degson}, as well as explore the structure of $\SO(n)$ in further depth, it is useful to compute this degree explicitly. We were able to do this for small $n$ using symbolic and numerical computations. A comparison of the success of these two approaches, together with our formula from Theorem \ref{degson}, is illustrated by the following table.

\begin{table}[H]\label{fig:SO}
\centering
\begin{tabular}{c | c  c  c}
\hline 
\hline
$\mathbf{n}$ & \hspace{0.2 in} \textbf{Symbolic} & \hspace{0.2 in} \textbf{Numerical} &\hspace{0.2 in}  \textbf{Formula} \\ \hline
2 & 2 & 2 & 2 \\ 
3 & 8 & 8 & 8 \\ 
4 & 40 & 40 & 40 \\
5 & 384 & 384 & 384 \\ 
6 & - & 4768 & 4768 \\ 
7 & - & 111616 & 111616 \\ 
8 & - & - & 3433600 \\ 
9 & - & - & 196968448 \\
\hline
\hline
\end{tabular}
\caption{Degree of $\SO(n)$ computed in various ways}
\end{table}

This project started in the spring of 2014, when Benjamin Recht asked the fifth author to describe the geometry of low-rank semidefinite programming (see Section~\ref{SDP}). In particular, he asked why the augmented Lagrangian algorithm for solving this problem \cite{BM} almost always recovers the correct optimum despite the existence of multiple local minima. It quickly became clear that to even compute the number of local extrema, one needs to know the degree of the orthogonal group. In Section~\ref{SDP} we find a formula for the number of critical points of low-rank semidefinite programming (see Theorem~\ref{thm:LRSDP}).

The rest of this article is organized as follows. In Section \ref{backgroundsect} we give the reader a brief introduction to algebraic groups and state Kazarnovskij's Theorem. Section \ref{sec:degson} proves Theorem \ref{degson} by applying Kazarnovskij's Theorem and simplifying the resulting expressions. After simplification, we are left with a determinant of binomial coefficients which can be interpreted combinatorially using the celebrated Gessel-Viennot lemma which we describe in Section \ref{NILP}. The relationship between the degree of $\SO(n)$ and the degree of low-rank semidefinite programming is elaborated upon in Section \ref{SDP}. Section \ref{Computational} contains descriptions of the symbolic and numerical techniques involved in the explicit computation of $\deg \SO(n)$. Finally, in Section \ref{reality} we explore questions involving the real points on $\SO(n)$.

\section{Background}
\label{backgroundsect}

In this section we provide the reader with the necessary language to understand the statement of Kazarnovskij's Theorem (see Theorem \ref{Kazarnovskij}), our main tool for determining the degree of $\SO(n)$. We invite those who already are familiar with Lie theory to skip to the statement of Theorem \ref{Kazarnovskij} and continue to Section \ref{sec:degson} for our main result. We note, that aside from applying Theorem \ref{Kazarnovskij}, no understanding of the material in this section is necessary for understanding the remainder of the proof of Theorem \ref{degson}. A more thorough treatment of the theory of algebraic groups can be found in \cite{derksen,fulton,humphreys}.

An \emph{algebraic group} $G$ is a variety equipped with a group structure such that multiplication and inversion are both regular maps on $G$. When the unipotent radical of $G$ is trivial and $G$ is over an algebraically closed field, we say that $G$ is a \emph{reductive group}. Throughout this section, we let $G$ denote a connected reductive algebraic group over an algebraically closed field $\kk$. Let $\Gm$ denote the multiplicative group of $\kk$, so as a set, $\Gm = \kk \setminus \{0\}$.  Let $T$ denote a fixed maximal torus of $G$. By \emph{maximal torus}, we mean a subgroup of $G$ isomorphic to $\Gm^r$ and which is maximal with respect to inclusion. The number $r$ is well-defined and is called the \emph{rank} of $G$.
After fixing $T$, we define the \emph{Weyl group} of $G$, denoted $W(G)$, to be the quotient of the normalizer of $T$ by its centralizer, $W(G) = N_G(T)/Z_G(T)$. Like $r$,  $W(G)$ does not depend on the choice of $T$ up to isomorphism.

\begin{example}
\label{maxTori}
We can parametrize $\SO(2,\CC)$ by $\Gm$ via the map
\[ \mathbf{R}(t) := \frac{1}{2}\begin{pmatrix}
    t+t^{-1}    & -i(t-t^{-1}) \\
    i(t-t^{-1}) &  t+t^{-1}
   \end{pmatrix}, \]
which is in fact a group isomorphism.  (Note that $\mathbf{R}(e^{i\theta})$ is the rotation matrix by angle $\theta$.)  Therefore $\SO(2)$ has rank 1.

Fix $r \in \mathbb{N}$. Then
\begin{align*}
T_{2r}&:=\left\{
\begin{pmatrix}
\mathbf{R}(t_1) & 0 & 0 & \cdots & 0\\
0 & \mathbf{R}(t_2) & 0 & \cdots & 0 \\
\vdots & \vdots & \vdots & \ddots & \vdots \\
0 & 0 & 0 & \cdots & \mathbf{R}(t_r)
\end{pmatrix} \Bigg|  t_i \in \Gm \right\}\cong \SO(2)^{r}\subset \SO(2r)\\
 T_{2r+1}&:=\left\{
\begin{pmatrix}
\mathbf{R}(t_1) & 0 & 0 & \cdots & 0 & 0\\
0 & \mathbf{R}(t_2) & 0 & \cdots & 0 & 0\\
\vdots & \vdots & \vdots & \ddots & \vdots & \vdots\\
0 & 0 & 0 & \cdots & \mathbf{R}(t_r) & 0\\
0 & 0 & 0 & \cdots & 0 & 1
\end{pmatrix}\; \Bigg| \; t_i \in \Gm \right\}\cong \SO(2)^{r}\subset \SO(2r+1)
\end{align*}
are maximal tori of rank $r$ of their respective groups. Therefore, $\text{rank}(\SO(2r))=\text{rank}(\SO(2r+1))=r$ and we see that the rank of $\SO(n)$ depends fundamentally on the parity of $n$.
\end{example}

The \emph{character group} $M(T)$ is the set of algebraic group homomorphisms from $T$ to $\Gm$, i.e. group homomorphisms defined by polynomial maps,
\begin{equation*}
M(T):=\Hom_{\Grp}(T,\Gm).
\end{equation*}
Since $T$ is isomorphic to $\Gm^r$, all such homomorphisms must be of the form
\[ (t_1,\ldots,t_r) \mapsto t_1^{a_1}\cdots t_r^{a_r} \]
with $a_1,\ldots,a_r$ integers. This character group is isomorphic to $\ZZ^r$ and for this reason it is often called the character lattice. Dual to this is the \emph{group of 1-parameter subgroups}
\begin{equation*}
N(T):=\Hom_{\Grp}(\Gm,T),
\end{equation*}
which is also isomorphic to $\ZZ^r$. Indeed, each 1-parameter subgroup is of the form $t \mapsto (t^{b_1},\ldots,t^{b_r})$ for integers $b_1,\ldots,b_r$.  There exists a natural bilinear pairing between $N(T)$ and $M(T)$, given by
\begin{align*}
M(T)\times N(T)&\rightarrow \Hom_{\Grp}(\Gm,\Gm)\cong\ZZ\\
\langle \chi, \sigma \rangle &\mapsto \chi \circ \sigma.
\end{align*}

Now if $\rho:G\rightarrow{}\GL(V)$ is a representation of $G$ we attach to it special characters called weights. A \emph{weight} of the representation $\rho$ is a character $\chi\in M(T)$ such that the set
\[
V_\chi:=\bigcap_{s\in T}\ker(\rho(s)-\chi(s)\Id_V)
\] 
is non-trivial. This condition is equivalent to saying that all of the matrices in $\{\rho(s) \; | \; s\in T\}$ have a simultaneous eigenvector $v\in V$ such that the associated eigenvalue for $\rho(s)$ is $\chi(s)$. We will use $C_V$ to denote the convex hull of the weights of the representation $\rho$.

\begin{example}\label{ex:weight}
An example that will be important for us later will be the standard representation coming from the natural embedding $\rho:\SO(n) \to \GL(\CC^n)$.  For any $t \in \Gm$, the matrix $\mathbf{R}(t) \in \SO(2)$ has eigenvectors $e_1 + ie_2$ and $e_1 - ie_2$ with eigenvalues $t$ and $t^{-1}$ respectively.  From the explicit description of $T$ in Example \ref{maxTori} we see that the eigenvectors of $\rho(t_1,\ldots,t_r)$ are all vectors of the form $e_{2j-1} \pm ie_{2j}$ with $1\leq j \leq r$ and the eigenvalues are $t_1^{\pm 1},\ldots,t_r^{\pm 1}$.  These eigenvalues, viewed as characters, are the weights of $\rho$.  Additionally when $n = 2r+1$, we have that $e_{2r+1}$ is an eigenvector with eigenvalue 1, corresponding to the trivial character.
\end{example}

Another representation of a matrix group $G \subseteq \End(V)$ is the \emph{adjoint representation}, $\Ad:G \to \GL(\End(V))$, with $\Ad(g)$ the linear map defined by $A \mapsto gAg^{-1}$.
The \emph{roots} of $G$ are the weights of the adjoint representation.  Given a linear functional $\ell$ on $M(T)$, we  define the \emph{positive roots} of $G$ with respect to $\ell$ to be the roots $\chi$ such that $\ell(\chi)>0$. We denote the positive roots of $G$ by $\alpha_1,\ldots,\alpha_l$. For the algebraic groups in this paper, we can choose $\ell$ to be the inner product with the vector $(r,r-1,\ldots,1)$ so that a root of the form $e_j-e_k$ is positive if and only if $j<k$. To each root $\alpha$, we associate a \emph{coroot} $\check \alpha$, defined to be the linear function $\check \alpha (\vec{x}):= \frac{2\langle\vec{x},\alpha\rangle}{\langle\alpha,\alpha\rangle}$ where $\langle , \rangle$ must be $W(G)$-invariant. Throughout this paper, we fix this to be the standard inner product.

\begin{example}
We now compute the roots of $\SO(n)$, starting with $n$ even.  It can be shown that the simultaneous eigenvectors of $\Ad(s)$ over all $s \in T$ are matrices $A$ with the following structure. These matrices are zero outside a $2\times 2$ block $B$ in rows $2j-1,2j$ and columns $2k-1,2k$ for some $1\leq j,k \leq r$. Furthermore, $B = v_1v_2^T$ with each $v_k$ equal to one of the eigenvectors of $\mathbf{R}(t)$, $e_1 \pm ie_2$.
Indeed, suppose $s \in T$ has blocks along the diagonal $\mathbf{R}(t_j)$ with $t_1,\ldots,t_r \in \Gm$.  Then $\Ad(s)(A)$ will also be zero except in the same $2\times 2$ block, and that block will be
 \[ \mathbf{R}(t_j)B\mathbf{R}(t_k)^T = t_j^{\pm 1}t_k^{\pm 1}B, \]
where the signs in the exponents depend on the choices of $v_1$ and $v_2$.
Thus the roots of $\SO(2r)$ are the characters of the form $t_j^{\pm 1}t_k^{\pm 1}$ for $1\leq j,k \leq r$.

In the case that $n$ is odd, $A$ has an extra row and column.  Consider $A$ with support only in the last column.  Then for $s \in T$, $\Ad(s)(A) = sAs^{-1}$ but $s^{-1}$ acts trivially on the left, while $s$ acts on the last column as an element of $\GL(\CC^n)$ as in the standard representation.  As in Example \ref{ex:weight} we get weights $t_1^{\pm 1},\ldots,t_r^{\pm 1},1$.  The same weights appear for $A$ with support in the last row.
\end{example}
 
Associated to $G$ is a Lie algebra $\mathfrak{g}$, which comes equipped with a Lie bracket $[\ ,\ ]$. A \emph{Cartan subalgebra} $\mathfrak{h}$ is a nilpotent subalgebra of $\mathfrak{g}$ that is self-normalizing, meaning if $[x,y] \in \mathfrak{h}$ for all $x \in \mathfrak{h}$, then $y \in \mathfrak{h}$.
  Let $S(\mathfrak{h}^*)$ be the ring of polynomial functions on $\mathfrak{h}$. The Weyl group $W(G)$ acts on $\mathfrak{h}$, and this extends to an action of $W(G)$ on $S(\mathfrak{h}^*)$. The space $S(\mathfrak{h}^*)^{W(G)}$ of polynomials which are invariant up to the action of $W(G)$ is generated by $r$ homogeneous polynomials whose degrees, $c_1+1,\ldots,c_r+1$, are uniquely determined. 
The values $c_1, \ldots, c_r$ are called \emph{Coxeter exponents}.

We are now prepared to state Kazarnovskij's theorem.

\begin{theorem}[Kazarnovskij's Theorem, Prop 4.7.18 \cite{derksen}]\label{Kazarnovskij}
Let $G$ be a connected reductive group of dimension $m$ and rank $r$ over an algebraically closed field. If  $\rho:G\rightarrow{}\GL(V)$ is a representation with finite kernel then,
\[
\deg\overline{\rho\left(G\right)}=\frac{m!}{|W(G)|(c_1!c_2!\cdots c_r!)^2|\ker(\rho)|}\int_{C_V}(\check{\alpha}_{1}\check{\alpha}_2\cdots\check{\alpha}_l)^2dv.
\]
where $W(G)$ is the Weyl group, $c_i$ are Coxeter exponents, $C_V$ is the convex hull of the weights, and $\check  \alpha_i$ are the  coroots.
\end{theorem}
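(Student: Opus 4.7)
The plan is to apply Kazarnovskij's Theorem (Theorem~\ref{Kazarnovskij}) directly to the standard representation $\rho\colon\SO(n)\hookrightarrow\GL(\CC^n)$, which has trivial kernel, and then evaluate and simplify each ingredient. Because the representation is faithful, $|\ker(\rho)|=1$, and from the descriptions in the background section the convex hull of the weights is the standard cross-polytope $C_V=\mathrm{conv}(\pm e_1,\ldots,\pm e_r)\subset\RR^r$ (the trivial weight in the odd case does not enlarge $C_V$).

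First I would gather the Lie-theoretic data, handling the $B_r$ case ($n=2r+1$) and the $D_r$ case ($n=2r$) in parallel: dimension $m=\binom{n}{2}$; Weyl group orders $|W(B_r)|=2^r r!$ and $|W(D_r)|=2^{r-1}r!$; Coxeter exponents $1,3,\ldots,2r-1$ in type $B$ and $1,3,\ldots,2r-3,\,r-1$ in type $D$; and positive coroots $e_j\pm e_k$ for $j<k$, together with the additional coroots $2e_j$ in type $B$. Substituting these into Kazarnovskij's formula produces an explicit integral of a squared product of linear forms over the cross-polytope, with every extrinsic prefactor made explicit.

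The second step is to reduce the integral over $C_V$ to an integral over the standard simplex $\Delta=\{x\in\RR_{\geq 0}^r : \sum x_i\leq 1\}$. The integrand $\bigl(\prod_i\check\alpha_i\bigr)^2$ is invariant under the Weyl group action (permutations and sign changes of coordinates), so the integral over $C_V$ equals $2^r r!$ times the integral over $\Delta$, which cancels a large portion of the prefactor. The change of variables $y_j=x_j^2$ then converts the remaining integrand into $\prod_{j<k}(y_j-y_k)^2$ times a monomial weight (with an extra factor of $\prod_j y_j$ appearing in type $B$), i.e.\ a Selberg-type integral on $\Delta$.

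The main obstacle is evaluating this integral in a form that matches the stated determinant. The approach I would take is to expand $\prod_{j<k}(y_j-y_k)^2=\det(y_i^{j-1})^2$ via the Cauchy--Binet identity, which converts the integral into $\det\bigl(\int_\Delta y^{a+i+j-2}\,dy\bigr)$, a determinant whose entries are Dirichlet integrals and hence ratios of factorials. After pulling common row and column factors out and simplifying the factorials, the entries should collapse to the binomial coefficients $\binom{2n-2i-2j}{n-2i}$, while the accumulated scalar prefactor reduces to $2^{n-1}$. The bookkeeping that verifies both parities of $n$ yield the same closed form, and in particular that the mild discrepancy between the $B_r$ and $D_r$ Coxeter exponents and Weyl group orders is exactly absorbed by the extra $B_r$ coroots, is where the real effort lies.
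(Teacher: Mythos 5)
The theorem you were asked to prove is Kazarnovskij's Theorem itself (Theorem~\ref{Kazarnovskij}), a deep result about degrees of images of reductive-group representations which the paper does not prove at all: it is imported wholesale as Proposition~4.7.18 of~\cite{derksen} and used as a black box. Your proposal, however, opens with the sentence ``The plan is to apply Kazarnovskij's Theorem (Theorem~\ref{Kazarnovskij}) directly to the standard representation $\rho\colon\SO(n)\hookrightarrow\GL(\CC^n)$,'' and everything that follows is a computation of $\deg\SO(n)$ \emph{assuming} Kazarnovskij's formula. As an argument for Theorem~\ref{Kazarnovskij} this is circular; what you have actually sketched is the proof of Theorem~\ref{degson}, which is a different statement in the paper.

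Read on its own merits as a sketch of Theorem~\ref{degson}, your outline is essentially the paper's: gather the root data for $B_r$ and $D_r$, use the parity symmetry of the integrand to pass from the cross-polytope $C_V$ to the standard simplex, expand the Vandermonde square, integrate monomials by the Dirichlet formula, and recognize a determinant of binomial coefficients. One small slip: the sign-change symmetry gives only a factor of $2^r$ (one simplex per orthant), not $2^r\,r!$; the permutation part of the Weyl group is kept inside the double sum from the Vandermonde expansion rather than being quotiented out of the domain. But none of this addresses Theorem~\ref{Kazarnovskij}. Actually proving Kazarnovskij's Theorem requires machinery absent from both your sketch and this paper --- a Bernstein--Kushnirenko-type mixed-volume/Newton-polytope analysis of a generic system pulled back through $\rho$, as carried out in~\cite{kazarnovskii} and~\cite{derksen} --- so on the stated problem there is a genuine gap: you have not attempted the result in question.
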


If $\rho$ is the standard representation for an algebraic group $G$, then it follows that $\deg \overline{\rho(G)}=\deg G$. Therefore, in order to compute $\deg \SO(n)$, all we must do is apply this theorem for the standard representation of $\SO(n)$. The relevant data for this theorem is given in Table \ref{data} below for $\SO(n)$ and $\Sp(n)$.

\begin{table}[H]
\label{data}
\caption{}
\begin{tabular}{  l  c c c c c c}
\hline
\hline
Group& Dimension\hspace{0.18 in}& Rank \hspace{0.15 in}& Positive Roots\hspace{0.15 in} & Weights  \hspace{0.15 in} & $|W(G)|$\hspace{0.15 in} &  Coxeter Exponents  \\ \hline \\ 
$\SO(2r)$& ${2r}\choose{2}$ & $r$ &$\{e_i \pm e_j\}_{ i<j }$ & $\{\pm e_i\}$ & $r!2^{r-1}$ & $1,3,\ldots,2r-3,r-1$ \\ \\ 
 $\SO(2r+1)$ & ${2r+1}\choose{2}$& $r$ & $\{e_i \pm e_{j}\}_{i<j} \cup \{e_i\}$ & $\{\pm e_i\}$ & $r! 2^r$ & $1,3, 5, \ldots, 2r-1$ \\ \\ 
 $\Sp(r)$ & ${2r}\choose{2}$ & $r$ &$\{e_i \pm e_j\}_{i<j }\cup\{2e_i\}$ & $\{\pm e_i\}$ & $r! 2^r$ & $1,3, 5, \ldots, 2r-1$ \\ \\ 
\hline
\hline
\end{tabular}
\end{table}

\section{Main Result: The Degree of $\SO(n)$}\label{sec:degson}

We now prove our main result, Theorem \ref{degson}. At the end of this section we use the same method to obtain a formula for the degree of the symplectic group.

We begin by directly applying Theorem~\ref{Kazarnovskij} to $\SO(2r)$ and $\SO(2r+1)$ to obtain

\begin{align}
\label{eqn:even-int}
\deg\SO(2r)&=\frac{\displaystyle \binom{2r}{2}!}{\displaystyle r!2^{r-1}(r-1)!^2\prod_{k=1}^{r-1}(2k-1)!^2} \int_{C_V} \left(\prod_{1\leq i<j\leq r}(x_i^2-x_j^2)^2\right)dv,\\
\label{eqn:odd-int}
\deg\SO(2r +1) &= \frac{\displaystyle \binom{2r +1}{2}!}{r!2^r \displaystyle \prod_{k=1}^r(2k-1)!^2} \int_{C_V} \left(\prod_{1\leq i<j\leq r}(x_i^2-x_j^2)^2\prod_{i=1}^r(2x_i)^2\right)dv.
\end{align}
Thus, to compute the degree of $\SO(n)$ it suffices to find formulas for the integrals above. We do this by first expanding the integrand into monomials, and then integrating the result. 
We use the well-known expression for the determinant of the Vandermonde matrix,
\[ \prod_{1\leq i < j\leq r}(y_j-y_i)=\sum_{\sigma\in S_r}\left(\sgn(\sigma)\prod_{i=1}^r y_i^{\sigma(i)-1}\right). \]
Substituting $y_i = x_i^2$ and squaring the entire expression yields
\begin{equation}\label{rewrite-integrand}
 \prod_{1\leq i<j\leq r}(x_i^2-x_j^2)^2 = \sum_{\sigma,\tau \in S_r}\left(\sgn(\sigma\tau) \prod_{i=1}^r x_i^{2\sigma(i)+2\tau(i)-4}\right).
\end{equation}
Additionally, we point out that every variable in the integrand is being raised to an even power and $C_V$ is the convex hull of weights, $\{\pm e_i\}$. Because of this symmetry, the integrals over $C_V$ are $2^r$ times the same integrals over $\Delta_r$, the standard $r$-simplex. We have now reduced the computation of this integral to understanding the integral of any monomial over the standard simplex. The following proposition provides a formula for this.

\begin{proposition}[Lemma 4.23 \cite{milneANT}]\label{integral-monomial}
Let $\Delta_{r}\subset \mathbb{R}^{r}$ be the standard $r$-simplex. If $\mathbf{a}=(a_1,\ldots,a_r)\in \mathbb{Z}_{>0}^r$ then
\[
\int_{\Delta_r}\mathbf{x}^\mathbf{a}d\mathbf{x}=\int_{\Delta_r}x_1^{a_1}x_{2}^{a_2}\cdots x_{r}^{a_r}dx_1dx_2\cdots dx_r=
\frac{1}{(r+\sum a_i)!}\prod_{i=1}^r a_i!.
\]
\end{proposition}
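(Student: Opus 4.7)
The plan is to prove the identity by induction on $r$, reducing it to a one-dimensional Beta integral at each stage. This is arguably the most elementary approach and the one most directly suited to the formula's form.

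For the base case $r = 1$, the simplex $\Delta_1$ is the interval $[0,1]$, and we compute directly
\[
\int_0^1 x_1^{a_1} \, dx_1 = \frac{1}{a_1+1} = \frac{a_1!}{(1+a_1)!},
\]
which matches the claimed formula.

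For the inductive step, I would fix the variable $x_r \in [0,1]$ and integrate the remaining variables over the scaled simplex $\{x_i \geq 0 : \sum_{i=1}^{r-1} x_i \leq 1 - x_r\}$. Under the substitution $x_i = (1-x_r)y_i$ for $i < r$, this region becomes $\Delta_{r-1}$ in the $y$-variables, the Jacobian contributes $(1-x_r)^{r-1}$, and the monomial contributes an additional factor of $(1-x_r)^{a_1+\cdots+a_{r-1}}$. Applying the inductive hypothesis to the inner integral then yields
\[
\int_{\Delta_r} \mathbf{x}^{\mathbf{a}} \, d\mathbf{x} = \frac{\prod_{i=1}^{r-1} a_i!}{\bigl((r-1) + \sum_{i<r} a_i\bigr)!} \int_0^1 x_r^{a_r} (1-x_r)^{(r-1) + \sum_{i<r} a_i} \, dx_r.
\]

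The remaining one-dimensional integral is precisely the Beta function $B(a_r+1,\, (r-1)+\sum_{i<r} a_i + 1)$, which evaluates to $\frac{a_r! \cdot ((r-1)+\sum_{i<r} a_i)!}{(r+\sum_i a_i)!}$. Multiplying and cancelling the factor of $((r-1) + \sum_{i<r} a_i)!$ produces the desired expression $\prod a_i!/(r + \sum a_i)!$. There is no real obstacle here; the only point requiring care is tracking the two sources of $(1-x_r)$ powers—the Jacobian and the monomial itself—so that the exponent in the Beta integral comes out correctly.
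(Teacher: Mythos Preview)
Your induction argument is correct and is the standard elementary derivation of this Dirichlet-type integral. Note, however, that the paper does not supply its own proof of this proposition: it is quoted as Lemma~4.23 of \cite{milneANT} and simply invoked as a known identity in the proof of Proposition~\ref{two-integrals}. So there is no in-paper proof to compare against; your argument would serve perfectly well as a self-contained justification were one desired.
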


We can now get expressions for the integrals in (\ref{eqn:even-int}) and (\ref{eqn:odd-int}) directly by applying (\ref{rewrite-integrand}) and Proposition \ref{integral-monomial}.

\begin{minipage}{\textwidth}
\begin{proposition}\label{two-integrals}
Let $I_{even}(r)$ and $I_{odd}(r)$ denote the integrals in (\ref{eqn:even-int}) and (\ref{eqn:odd-int}) respectively. Then,
\[
I_{even}(r) =\frac{r!2^r}{\binom{2r}{2}!}\det\left((2i+2j-4)!\right)_{1\leq i,j\leq r}.
\]
\[
I_{odd}(r) =\frac{r!2^{3r}}{\binom{2r+1}{2}!}\det\left((2i+2j-2)!\right)_{1\leq i,j\leq r}.
\]
\end{proposition}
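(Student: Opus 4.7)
The plan is to simply unfold the integrals using the tools already assembled in the excerpt, and then recognize the resulting double sum over $S_r \times S_r$ as $r!$ times an $r\times r$ determinant.

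First, I would reduce each integral over $C_V$ to an integral over the standard simplex $\Delta_r$. Since $C_V$ is the convex hull of $\{\pm e_i\}$, i.e.\ the cross-polytope, and the integrand is a polynomial in the variables $x_i^2$ (so invariant under each sign change $x_i \mapsto -x_i$), the integral over $C_V$ equals $2^r$ times the integral over $\Delta_r$. This picks up a factor of $2^r$ in the even case and of $2^r \cdot 4^r = 2^{3r}$ in the odd case (after pulling out the $(2x_i)^2$ terms).

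Next, I would plug in the expansion (\ref{rewrite-integrand}). In the even case each monomial is $\prod_i x_i^{2\sigma(i) + 2\tau(i) - 4}$, and by Proposition \ref{integral-monomial} its integral over $\Delta_r$ is
\[
\frac{\prod_i (2\sigma(i)+2\tau(i)-4)!}{\bigl(r + \sum_i (2\sigma(i)+2\tau(i)-4)\bigr)!}.
\]
The key simplification is that $\sum_i (\sigma(i)+\tau(i)) = r(r+1)$ is independent of $\sigma,\tau$, so the denominator is the constant $r(2r-1)! = \binom{2r}{2}!$ and can be pulled out of the sum. The same thing happens in the odd case, producing a constant denominator $\binom{2r+1}{2}!$ after accounting for the extra $\prod_i x_i^2$ in the integrand.

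The remaining step is to identify
\[
S(r) \;:=\; \sum_{\sigma,\tau \in S_r} \sgn(\sigma\tau) \prod_{i=1}^r M_{\sigma(i),\tau(i)}
\]
as $r!\,\det(M)$, where $M = \bigl((2i+2j-4)!\bigr)$ in the even case and $M = \bigl((2i+2j-2)!\bigr)$ in the odd case. This is the step I'd expect to require the most thought, but it follows by the standard reparametrization $\pi := \tau\sigma^{-1}$: then $\sgn(\sigma\tau) = \sgn(\pi)$ and the product becomes $\prod_j M_{j,\pi(j)}$ after reindexing $j = \sigma(i)$, so the inner sum over $\pi$ is $\det(M)$ and the outer sum over $\sigma$ just multiplies by $r!$. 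Combining all the accumulated factors ($2^r$ or $2^{3r}$, the reciprocal $1/\binom{n}{2}!$, and $r!\det(M)$) yields exactly the two claimed formulas.
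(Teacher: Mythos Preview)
Your proposal is correct and follows essentially the same argument as the paper: reduce to the simplex by symmetry, expand via the squared Vandermonde identity, apply Proposition~\ref{integral-monomial} (noting the denominator is constant since $\sum_i(\sigma(i)+\tau(i))$ is fixed), and then collapse the double sum to $r!\det(M)$ via the substitution $\pi=\tau\sigma^{-1}$. One small notational slip: ``$r(2r-1)! = \binom{2r}{2}!$'' should read $\bigl(r(2r-1)\bigr)! = \binom{2r}{2}!$.
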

\end{minipage}

\begin{proof}
As mentioned above, we can compute $I_{odd}$ by considering the integrand only over the simplex. This, along with equation \eqref{rewrite-integrand} gives us that
\begin{align*}
I_{odd}(r) &= 2^r \int_{\Delta_r} \prod_{1\leq i<j\leq r} (x_i^2-x_j^2)^2 \prod_{i=1}^r (2x_i)^2 dv\\
&= 2^r \int_{\Delta_r} \left(\sum_{\sigma,\tau \in S_r}\sgn(\sigma\tau) \prod_{i=1}^r x_i^{2\sigma(i)+2\tau(i)-4}\right) \prod_{i=1}^r(2x_i)^2 dv\\
&= 2^{3r} \sum_{\sigma,\tau \in S_r}\sgn(\sigma\tau)\int_{\Delta_r}\prod_{i=1}^r x_i^{2\sigma(i)+2\tau(i)-2} dv.
\end{align*}
As the integrand is homogeneous of degree $4\binom{r}{2}+2r$, applying Proposition \ref{integral-monomial} and simplifying yields
\begin{align*}
I_{odd}(r)
&= \frac{2^{3r}}{\left(4\binom{r}{2}+3r\right)!} \sum_{\sigma,\tau\in S_r} \sgn(\sigma\tau) \prod_{i=1}^r (2\sigma(i)+2\tau(i)-2)!,
\end{align*}
which after replacing $i$ with $\sigma^{-1}(i)$ gives us
\[ \prod_{i=1}^r(2\sigma(i)+2\tau(i)-2)! = \prod_{i=1}^r(2i+2\tau\sigma^{-1}(i)-2)!. \]
Let $\rho = \tau\sigma^{-1}$.  Over all pairs $\sigma,\tau \in S_r$, the permutation $\rho$ appears as each permutation in $S_r$ exactly $r!$ times,  and $\sgn(\sigma\tau) = \sgn(\rho)$.  Therefore, we have that
\begin{align*}
I_{odd}(r)
&= \frac{r!2^{3r}}{\left(4\binom{r}{2}+3r\right)!} \sum_{\rho\in S_r} \sgn(\rho) \prod_{i=1}^r (2i+2\rho(i)-2)!\\
&= \frac{r!2^{3r}}{\binom{2r+1}{2}!} \det\left((2i+2j-2)!\right)_{1\leq i,j\leq r}.
\end{align*}
The derivation of $I_{even}$  follows precisely the same steps.\qed
\end{proof}

Theorem \ref{degson} now follows directly from the subsequent simplification.

\begin{align*}
\deg \SO(2r+1) &= \frac{2^{2r}}{(1!3!\cdots(2r-1)!)^2} \det ((2i+2j-2)!)\\
&=\frac{2^{2r}}{(1!2!\cdots (2r-1)!)} \det\left(\frac{(2i+2j-2)!}{(2i-1)!}\right)\\
&=2^{2r}\det\left(\frac{(2i+2j-2)!}{(2i-1)!(2j-1)!}\right)\\
&=2^{2r}\det\left(\binom{2i+2j-2}{2i-1}\right)_{1\leq i,j\leq r}.
\end{align*}
Reversing the order of the rows and columns of the final matrix and reindexing produces the formula given in Theorem~\ref{degson}. Similarly, for the even case, we have

\begin{align*}
\deg \SO(2r) &= \frac{2}{(1!3!\cdots(2r-3)!(r-1)!)^2} \det ((2i+2j-4)!)\\
&= \frac{2 \cdot (2^{r-1})^2}{(1!3!\cdots(2r-3)!2\cdot 4 \cdots (2r-2))^2} \det ((2i+2j-4)!)\\
&= 2^{2r-1} \det \left(\frac{(2i+2j-4)!}{(2i-2)!(2j-2)!}\right)\\
&=2^{2r-1}\det\left(\binom{4r-2i-2j}{2r-2i}\right)_{1\leq i,j\leq r}.
\end{align*}

This finishes the proof of Theorem \ref{degson}. 

Since the orthogonal group $\OO(n)$ has two components that are isomorphic to $\SO(n)$, we immediately get a formula for the degree of $\OO(n)$. 

\begin{corollary}
\label{degOn}

The degree of $\OO(n)$ is given by
\begin{equation*}
\deg \OO(n)=2^{n}\det \left( {2n-2i-2j}\choose{n-2i} \right)_{1\leq i , j \leq \lfloor{\frac n 2}\rfloor}. 
\end{equation*}
\end{corollary}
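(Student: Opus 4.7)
The plan is to exploit the fact that $\OO(n)$ decomposes as a disjoint union of two irreducible components of the same dimension, each isomorphic to $\SO(n)$ as an algebraic variety, and then appeal to the additivity of degree over equidimensional components together with Theorem~\ref{degson}.

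Concretely, I would first write $\OO(n) = \SO(n) \sqcup \SO(n)^{-}$, where $\SO(n)^{-}$ denotes the set of orthogonal matrices of determinant $-1$. Fix any matrix $D \in \OO(n)$ with $\det D = -1$, for instance $D = \operatorname{diag}(-1,1,\ldots,1)$. Left multiplication by $D$ is a regular morphism $\SO(n) \to \SO(n)^{-}$ with regular inverse given by left multiplication by $D^{-1} = D$, hence an isomorphism of affine varieties. Note also that this isomorphism is linear in the entries of the matrix, so it preserves degree when $\SO(n)$ and $\SO(n)^{-}$ are viewed as subvarieties of $\operatorname{Mat}_{n,n}(\CC)$.

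Next I would invoke additivity of degree: if $X = X_1 \cup X_2$ is a union of two irreducible projective varieties of the same dimension $d$ (taking projective closures inside $\PP^{n^2}$), then a generic linear subspace $L$ of codimension $d$ misses the intersection $X_1 \cap X_2$ and meets each $X_i$ transversely in $\deg X_i$ points, so $\deg X = \deg X_1 + \deg X_2$. Since $\SO(n)$ and $\SO(n)^{-}$ are irreducible (being isomorphic to the connected algebraic group $\SO(n)$), equidimensional, and distinct, this yields
\[
\deg \OO(n) = \deg \SO(n) + \deg \SO(n)^{-} = 2\deg \SO(n).
\]

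Finally, substituting the closed formula from Theorem~\ref{degson} gives
\[
\deg \OO(n) = 2 \cdot 2^{n-1}\det\!\left(\binom{2n-2i-2j}{n-2i}\right)_{1\le i,j\le \lfloor n/2\rfloor} = 2^{n}\det\!\left(\binom{2n-2i-2j}{n-2i}\right)_{1\le i,j\le \lfloor n/2\rfloor}.
\]
There is no genuine obstacle here; the only mild subtlety is checking that the two components are disjoint (clear, since the determinant takes distinct values on them) and that additivity of degree applies, which just requires observing that both components have the same dimension $\binom{n}{2}$ and that their projective closures remain distinct irreducible components of the closure of $\OO(n)$.
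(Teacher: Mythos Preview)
Your argument is correct and follows exactly the approach the paper takes: the paper simply observes that $\OO(n)$ has two irreducible components each isomorphic to $\SO(n)$, so $\deg \OO(n) = 2\deg \SO(n)$, and then applies Theorem~\ref{degson}. You have supplied more detail than the paper does (the explicit linear isomorphism via left multiplication by $D$ and the additivity of degree over equidimensional components), but the underlying idea is identical.
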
  
Furthermore, as mentioned in the introduction, there is no reason, {\it a priori}, that the steps taken in this section are particular to $\SO(n)$. We now apply these methods to find the degree of $\Sp(r)$, the group of (complex) symplectic matrices.

Recall the \emph{symplectic group} over $\CC$ is defined to be 
$$
\Sp(r):=\Sp(r,\CC) = \{ M \in \text{Mat}_{2r,2r}(\CC) \ |\ M^T \Omega M = \Omega\},
$$
where 
$$
\Omega =
\begin{pmatrix}
0 & I_r \\
-I_r & 0
\end{pmatrix}.
$$

\begin{theorem}\label{thm:symplectic}
The degree of $\Sp(r)$ is given by
$$
\deg \Sp(r) =\det\left({2i + 2j - 2 \choose 2i-1}\right)_{1\leq i,j\leq r}.
$$
\end{theorem}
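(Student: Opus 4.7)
The plan is to mimic the proof of Theorem \ref{degson}, applying Kazarnovskij's Theorem directly to the standard representation $\rho : \Sp(r) \to \GL(\CC^{2r})$, which has trivial kernel since the center $\{\pm I\}$ acts non-trivially. The key observation is that $\Sp(r)$ and $\SO(2r+1)$ share essentially all of the input data to Kazarnovskij's Theorem: the same rank $r$, dimension $\binom{2r+1}{2}$, Weyl group order $r!\,2^r$, Coxeter exponents $1,3,\ldots,2r-1$, and standard-representation weights $\{\pm e_i\}$. In particular the polytope $C_V$ is the cross-polytope in both cases.

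The only discrepancy lies in the positive coroots. For $\Sp(r)$, the coroots corresponding to the short roots $e_i \pm e_j$ (for $i<j$) agree with those appearing for $\SO(2r+1)$, but the long roots $2e_i$ have coroots $\check{(2e_i)} = e_i$ rather than $\check{e_i} = 2e_i$. Evaluating the squared product of positive coroots at $(x_1,\ldots,x_r)$ therefore gives
\[
\prod_{1 \leq i < j \leq r}(x_i^2 - x_j^2)^2 \prod_{i=1}^r x_i^2,
\]
which is exactly $4^{-r}$ times the integrand of $I_{odd}(r)$ from equation (\ref{eqn:odd-int}).

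Substituting into Kazarnovskij's formula, all the prefactors ($\binom{2r+1}{2}!$, $r!\,2^r$, $(1!3!\cdots(2r-1)!)^2$) match those in the $\SO(2r+1)$ computation, so one obtains $\deg\Sp(r) = \tfrac{1}{4^r}\deg\SO(2r+1)$. Combining this with the intermediate expression
\[
\deg \SO(2r+1) = 2^{2r}\det\binom{2i+2j-2}{2i-1}_{1 \leq i,j \leq r}
\]
already derived in the proof of Theorem \ref{degson}, the $4^r = 2^{2r}$ factors cancel and the claimed formula drops out immediately; no new integral needs to be evaluated. The only real bookkeeping to verify is the coroot computation for the $C_r$ root system and the accounting of the powers of $2$ between the $B_r$ and $C_r$ cases, which is the standard Langlands-dual swap of short and long roots.
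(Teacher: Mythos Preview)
Your proof is correct and matches the paper's approach: both apply Kazarnovskij's formula, observe that all invariants coincide with those of $\SO(2r+1)$ except that the coroot product for $C_r$ differs from that of $B_r$ by a factor of $2^r$, and then reduce to the $\SO(2r+1)$ computation. The paper re-evaluates the integral (noting it is the $\SO(2r+1)$ integral up to powers of $2$) and only afterward records the identity $\deg\SO(2r+1)=2^{2r}\deg\Sp(r)$ as a corollary, whereas you derive that identity first and plug in the known formula; this is a minor reordering, not a different argument.
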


For $1 \leq r \leq 5$ the values of $\deg \Sp(r)$ are $2,24,1744,769408,2063048448,\ldots$. This was verified using both numerical and symbolic techniques up to $r=3$.
\begin{proof}
This is an application of Kazarnovskij's result which is completely analogous to the computation for the special orthogonal group. The integral is the same as the one for $\SO(2r+1)$ up to factors of 2, so it is evaluated in the same way, and then the expression can be simplified
\begin{align*}
\deg \left(\text{Sp}(r)\right) &=\frac{(r(2r+1))!}{r! 2^r (1!3!\cdots(2r-1)!)^2}\int_{C_V} \left(\prod_{1\leq i<j\leq r}(x_i-x_j)^2(x_i+x_j)^2\prod_{i=1}^r x_i^2\right)dv \\
&=\frac{1}{(1!3!\cdots(2r-1)!)^2} \det\left((2i+2j-2)!\right)_{1\leq i,j\leq r}\\
&=\det\left({2i + 2j - 2 \choose 2i-1}\right)_{1\leq i,j\leq r}.
\end{align*}\qed
\end{proof}

We remark that our formula for $\deg \text{Sp}(r)$ is particularly interesting because the determinant in Theorem \ref{thm:symplectic} is the same as the determinant in Theorem \ref{degson} when $n=2r+1$.

\begin{corollary}
\label{cor:sp}
$$\deg \SO(2r+1)=2^{2r}\deg \text{Sp}(r)$$
\end{corollary}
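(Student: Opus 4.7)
The plan is to observe that this corollary is an immediate consequence of comparing the two closed formulas already derived in this section. Specifically, in the proof of Theorem~\ref{degson} we arrived at
\[
\deg \SO(2r+1) = 2^{2r}\det\left(\binom{2i+2j-2}{2i-1}\right)_{1\leq i,j\leq r},
\]
and Theorem~\ref{thm:symplectic} states
\[
\deg \Sp(r) = \det\left(\binom{2i+2j-2}{2i-1}\right)_{1\leq i,j\leq r}.
\]
Since the two determinants are literally the same $r\times r$ matrix of binomial coefficients, dividing yields the claimed identity. So the proof is essentially a one-line comparison, with no real obstacle.

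If I wanted to give a more conceptual explanation rather than merely invoking the formulas, I would trace the identity back to the Kazarnovskij integrals. For $\SO(2r+1)$ the integrand is $\prod_{i<j}(x_i^2-x_j^2)^2 \prod_i (2x_i)^2$ over the cross-polytope $C_V$ with vertices $\pm e_i$, while for $\Sp(r)$ the integrand is $\prod_{i<j}(x_i-x_j)^2(x_i+x_j)^2 \prod_i x_i^2$ over the same $C_V$. These integrands agree up to a factor of $2^{2r}$. The prefactors $\frac{m!}{|W(G)| (c_1!\cdots c_r!)^2}$ from Theorem~\ref{Kazarnovskij} are identical for $\SO(2r+1)$ and $\Sp(r)$ since both groups have dimension $\binom{2r+1}{2} = r(2r+1)$, rank $r$, Weyl group of order $r!2^r$, and Coxeter exponents $1,3,\ldots,2r-1$ (see Table~\ref{data}). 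Hence the full expressions differ only by the factor $2^{2r}$ coming from the integrand, giving the corollary.

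The only minor point to be careful about is the matching of the prefactors, which one can read directly off Table~\ref{data}; beyond that, there is nothing to prove.
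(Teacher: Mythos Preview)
Your proof is correct and takes essentially the same approach as the paper: both compare the determinantal expressions for $\deg\SO(2r+1)$ and $\deg\Sp(r)$ and observe they differ only by the factor $2^{2r}$. The paper matches the statement of Theorem~\ref{thm:symplectic} to the final form in Theorem~\ref{degson} via the reindexing $(i,j)\mapsto(r-i+1,r-j+1)$, whereas you compare directly to the intermediate expression $2^{2r}\det\bigl(\binom{2i+2j-2}{2i-1}\bigr)$ obtained in the course of proving Theorem~\ref{degson}, which makes the reindexing step unnecessary; your additional conceptual remark via the Kazarnovskij integrals is also sound.
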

\begin{proof}
Sending the $(i,j)$ entry of the matrix in Theorem \ref{thm:symplectic} to the ${(r-i+1,r-j+1)}$ entry does not change the determinant and gives us that $$\deg \text{Sp}(r)=\det\left({4r+2-2i-2j \choose 2r+1-2i}\right)_{1\leq i,j\leq r}.$$
When $n=2r+1$, this is the matrix appearing in Theorem \ref{degson} and all that is different is the coefficient in front. Accounting for this coefficient finishes the proof.\qed
\end{proof}

\section{Non-Intersecting Lattice Paths}
\label{NILP}

The formulas given in the previous section for the degrees of $\SO(n),\OO(n),$ and $\text{Sp}(r)$ can be interpreted as a count of non-intersecting lattice paths via the Gessel-Viennot Lemma \cite{GV}. 

\begin{lemma}[Gessel-Viennot (Weak Version)]
\label{GV}
Let $A=\{a_1, \ldots,a_r\}$, $B=\{b_1, \ldots,b_r\}$ be collections of lattice points in $\mathbb{Z}^2$. Let $M_{i,j}$ be the number of lattice paths from $a_i$ to $b_j$ using only unit steps in either the North or East direction. If the only way that a system of these lattice paths from $A \to B$ do not cross each other is by sending $a_i\mapsto b_i$, then the determinant of $M$ equals the number of such non-intersecting lattice paths.
\end{lemma}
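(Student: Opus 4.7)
The plan is to run the classical Lindström–Gessel–Viennot sign-reversing involution. First expand the determinant combinatorially:
$$\det(M) \;=\; \sum_{\sigma \in S_r} \sgn(\sigma) \prod_{i=1}^r M_{i,\sigma(i)} \;=\; \sum_{\sigma \in S_r} \sgn(\sigma) \sum_{(P_1,\dots,P_r)} 1,$$
where the inner sum ranges over all $r$-tuples of north/east lattice paths with $P_i$ going from $a_i$ to $b_{\sigma(i)}$. Regrouping, this becomes a signed sum over all path systems $\mathbf{P}=(P_1,\dots,P_r)$ from $A$ to $B$, weighted by the sign of the induced permutation $\sigma_{\mathbf{P}}$.

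Next I would partition these path systems into non-intersecting ones and intersecting ones, and build a sign-reversing involution on the latter. Given an intersecting system $\mathbf{P}$, let $i$ be the smallest index such that $P_i$ meets another $P_k$; among the intersection points on $P_i$, let $v$ be the first one (in the order visited by $P_i$); and let $j$ be the smallest index $>i$ such that $P_j$ passes through $v$. Swap the tails of $P_i$ and $P_j$ at $v$ to get a new system $\mathbf{P}'$. The path $P_i'$ now ends at $b_{\sigma(j)}$ and $P_j'$ at $b_{\sigma(i)}$, so $\sigma_{\mathbf{P}'} = \sigma_{\mathbf{P}} \circ (i\ j)$ and $\sgn(\sigma_{\mathbf{P}'}) = -\sgn(\sigma_{\mathbf{P}})$.

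Then I would verify that this swap is an involution: the distinguished data $(i,j,v)$ must be recovered from $\mathbf{P}'$. Because the swap does not change which path-pair first meets, nor the first meeting point of $P_i'$, the same triple $(i,j,v)$ is extracted from $\mathbf{P}'$, and a second swap restores $\mathbf{P}$. Consequently, the intersecting systems cancel in pairs and contribute $0$ to $\det(M)$.

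Finally, the hypothesis of the weak version forces every non-intersecting system to realize the identity permutation, contributing $+1$ each. Therefore $\det(M)$ equals the number of non-intersecting path systems from $A$ to $B$, as claimed. The main obstacle is the careful bookkeeping needed to confirm that the tail-swap is genuinely an involution; everything else is formal. The hypothesis that non-crossing forces $a_i \mapsto b_i$ is precisely what lets us drop the usual $\sgn(\sigma)$ factor on the right-hand side.
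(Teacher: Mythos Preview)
The paper does not prove this lemma at all; it simply quotes it from Gessel and Viennot's original paper \cite{GV} and then applies it. So there is no ``paper's own proof'' to compare against.

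Your argument is the standard Lindstr\"om--Gessel--Viennot sign-reversing involution, and it is correct. The only place where one could stumble is the verification that the tail-swap is an involution, and your choice of canonical data $(i,j,v)$ handles this: since the multiset of lattice points covered by the system is unchanged by the swap, the set of shared vertices is unchanged; minimality of $i$ guarantees no $P_k$ with $k<i$ passes through $v$, so the same $i$ is recovered; $P_i'$ agrees with $P_i$ up to $v$, so $v$ is again its first shared vertex; and the paths through $v$ other than $P_i',P_j'$ are untouched, so $j$ is recovered. One small wording point: you write ``smallest index $>i$'' for $j$, which is fine here precisely because the minimality of $i$ rules out any smaller index, but the usual phrasing ``smallest index $\neq i$'' makes the involution check slightly more transparent.
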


The number of lattice paths from $(0,0)$ to $(i,j)$ is the binomial coefficient ${i+j} \choose i$. Since the matrix involved in the formulas for the degrees of $\SO(n),$ $\OO(n),$ and $\text{Sp}(r)$ has binomial coefficients as entries, it is natural to search for a interpretation of its determinant via Gessel-Viennot.

\begin{theorem}\label{non-intersecting}
Let $N(n)$ count the number of non-intersecting lattice paths from $A(n):=\{a_i\}_{i=1}^{\lfloor \frac n 2 \rfloor}$ to $B(n):=\{b_j\}_{j=1}^{\lfloor \frac n 2 \rfloor}$ where $a_i=(2i-n,0)$ and $b_j=(0,n-2j)$. Then
\begin{align*}
\deg\SO(n) &= 2^{n-1}N(n), \\
\deg\OO(n) &= 2^{n}N(n), \\
\deg\text{Sp}(r) &= N(2r+1).
\end{align*}

\end{theorem}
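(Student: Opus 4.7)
The plan is to apply the Lindström--Gessel--Viennot (LGV) lemma directly to the determinant formula of Theorem \ref{degson} and then leverage Corollaries \ref{degOn} and \ref{cor:sp} to obtain the $\OO(n)$ and $\Sp(r)$ statements for free.

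First I would compute the matrix of path counts. For sources $a_i = (2i-n,0)$ and sinks $b_j = (0,n-2j)$, a monotone North/East path from $a_i$ to $b_j$ has $n-2i$ East steps and $n-2j$ North steps, so the total number of such paths is
\[
M_{i,j} = \binom{(n-2i)+(n-2j)}{n-2i} = \binom{2n-2i-2j}{n-2i},
\]
which is precisely the matrix appearing inside the determinant of Theorem \ref{degson}.

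Next, I would verify the hypothesis of Lemma \ref{GV}: that the identity permutation is the unique permutation $\sigma$ admitting a non-intersecting system of paths $a_i \to b_{\sigma(i)}$. This is the standard LGV geometric check: the sources lie on the $x$-axis with $a_1$ leftmost and $a_r$ rightmost (where $r = \lfloor n/2\rfloor$), while the sinks lie on the $y$-axis with $b_1$ topmost and $b_r$ bottommost. For monotone NE paths, if some $\sigma \neq \mathrm{id}$ is realized, then there exist indices $i < k$ with $\sigma(i) > \sigma(k)$. The path from the leftmost source $a_i$ ends at the lower sink $b_{\sigma(i)}$ while the path from the more-rightward source $a_k$ ends at the higher sink $b_{\sigma(k)}$. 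A short argument (looking at the first moment the $x$-coordinates of the two paths agree) shows these two monotone paths must share a lattice point, contradicting the non-intersection assumption. Therefore $\sigma = \mathrm{id}$ is forced, the hypothesis of Lemma \ref{GV} holds, and we conclude
\[
N(n) = \det\!\left(\binom{2n-2i-2j}{n-2i}\right)_{1\le i,j\le \lfloor n/2\rfloor}.
\]

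Combining this identity with Theorem \ref{degson} yields $\deg \SO(n) = 2^{n-1} N(n)$. The formula $\deg \OO(n) = 2^{n} N(n)$ follows immediately from Corollary \ref{degOn}, since $\OO(n)$ has two connected components each isomorphic to $\SO(n)$. Finally, Corollary \ref{cor:sp} gives $\deg \Sp(r) = \deg \SO(2r+1)/2^{2r} = N(2r+1)$, completing all three parts.

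The main obstacle — and really the only non-bookkeeping step — is the uniqueness of the identity pairing for non-intersecting paths. This is essentially a picture-level argument about monotone NE paths between a staircase of sources on a horizontal axis and a staircase of sinks on a vertical axis, but care is needed to phrase it so that it handles both the even case (where $a_{n/2} = b_{n/2} = (0,0)$ is a degenerate ``zero-length path'') and the odd case uniformly.
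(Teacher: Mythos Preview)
Your proposal is correct and follows essentially the same approach as the paper: identify the matrix of Theorem~\ref{degson} as a path-count matrix for the given sources and sinks, invoke Gessel--Viennot, and then read off the $\OO(n)$ and $\Sp(r)$ statements from Corollaries~\ref{degOn} and~\ref{cor:sp}. Your write-up is in fact more detailed than the paper's, which simply notes that the matrix is a minor of Pascal's matrix and declares the point configuration ``correct'' for Gessel--Viennot without spelling out the identity-permutation check that you carry out.
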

\begin{proof}
It is enough to prove this theorem for $\SO(n)$ and apply Corollaries \ref{degOn} and \ref{cor:sp}. Noticing that the matrix appearing in Theorem \ref{degson} is the minor of Pascal's matrix which skips every other row and every other column up to $\lfloor \frac n 2 \rfloor$ shows that we have a correct point configuration for Gessel-Viennot.\qed
\end{proof}

\begin{example}
Figure \ref{fig:GVExample} computes that $N(5)=24$ by explicitly listing all $24$ non-intersecting lattice paths from $A(5)$ to $B(5)$. Then, according to Theorem \ref{non-intersecting}, we see that $\deg \SO(5)=2^4 \cdot 24=384$, $\deg \OO(5)=2^5 \cdot 24=768$, and $\deg \text{Sp}(2)=24$.

\begin{figure}
  \centering
  \includegraphics[width=0.55 \linewidth]{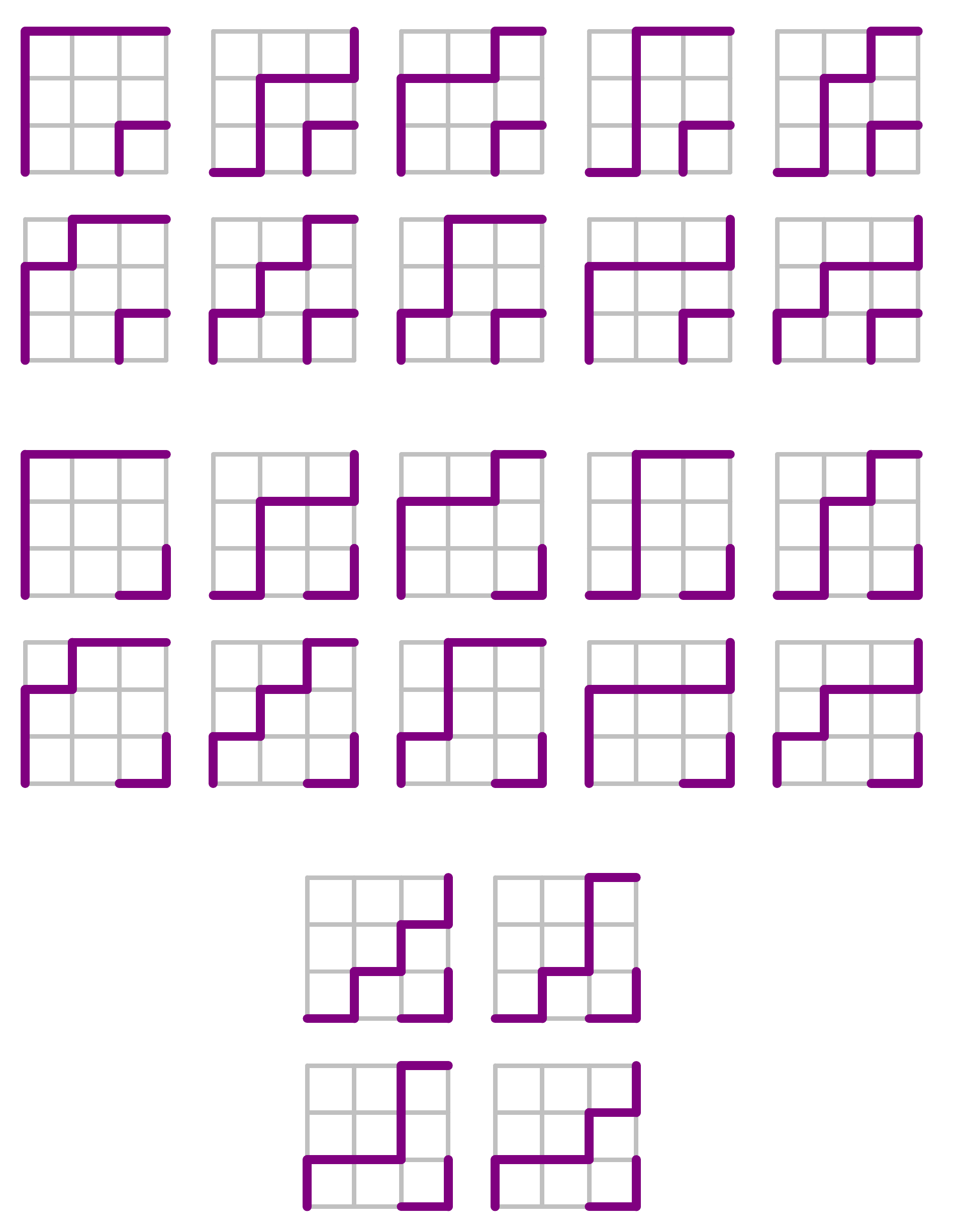}
  \caption{All $24$ instances of non-intersecting lattice paths from $A(5)$ to $B(5)$}
\label{fig:GVExample}
\end{figure}

\end{example}

Theorem \ref{non-intersecting} suggests a relationship between these non-intersecting lattice paths and the degrees of $\SO(n),\OO(n),$ and $\text{Sp}(r)$. Such a direct interpretation could be interesting, and so we pose the question:

\begin{question}
\label{ourQuestion}
Does Theorem~\ref{non-intersecting} have a deeper combinatorial interpretation?
\end{question}

Because the formula for the degree of the symplectic group has no coefficient in front of the lattice path count in Theorem \ref{non-intersecting}, studying the combinatorial meaning of the degree of $\text{Sp}(r)$ may be an ideal starting point to tackle Question \ref{ourQuestion}.

\section{An Application - The Degree of Low Rank Semidefinite Programming}
\label{SDP}
In this section we show how knowing the degree of $\SO(n)$ can be used to compute the number of critical points for a certain optimization problem (cf. Theorem \ref{thm:LRSDP}).

Consider the standard formulation of \emph{semidefinite programming}
\begin{align}\label{primalProblem}
&\text{minimize}_{X\in\mathcal S^n} \quad C\bullet X\notag\\
&\text{such that }\quad A_i\bullet X = b_i, i=1,..., m, \quad X\succeq 0.
\end{align}
Here $\mathcal S^n$ is the set of $n\times n$ real symmetric matrices, $b\in\mathbb Q^m$ is a vector, $C, A_1, ..., A_m\in\mathbb Q\mathcal S^n$ are matrices, and $\bullet$ denotes the trace inner product for matrices: $U\bullet V = \text{trace}(UV)$. 

Semidefinite programming can be solved in polynomial time in the size $n$ of the unknown matrix $X$ and in the number of constraints $m$. It is a widely used method in practice, and many NP-hard problems possess semidefinite relaxations \cite{BV2, GW2}. However, it is often the case that the size $n$ is very large, and solving \eqref{primalProblem} exactly can be computationally prohibitive. On the other hand, the rank $r$ of the optimal solution $X^*$ is often much smaller than $n$, and in those cases we can solve~\eqref{primalProblem} more rapidly by replacing $X$ by the low rank positive semidefinite matrix $RR^T$, where $R\in\mathbb R^{n\times r}$. This idea and an algorithm to solve the new problem are due to Burer and Monteiro~\cite{BM}. The problem becomes
\begin{align}\label{newProblem}
&\text{minimize}_{R\in\mathbb R^{n\times r}} \quad C\bullet (RR^T)\notag\\
&\text{such that }\hspace{0.33cm} A_i\bullet (RR^T) = b_i, ~~i=1, ..., m.
\end{align}
The constraint $X\succeq 0$ is now implicit and the number of variables has decreased from $n^2$ to $nr$. However, the objective function and the constraints are no longer linear; instead, they are quadratic and the feasible set is non-convex. In \cite{BM} ~Burer and Monteiro propose a fast algorithm for solving \eqref{newProblem}. Despite the existence of multiple local minima, in practice this algorithm quickly finds the global minimum. It starts by choosing the rank $r=1$, and increments it until $C - \sum_{i=1}^my_i A_i\succeq 0$, which ensures that we have arrived at the smallest optimal $r$.  For each fixed rank $r$, the optimization problem \eqref{newProblem} is non-convex, and its appealing behavior still remains to be examined. In Theorem~\ref{thm:LRSDP} we give a formula for the number of critical points of this optimization problem. 
We call a {\em critical point} of the optimization problem \eqref{newProblem} any point $(R, y)$ which satisfies the Lagrange multipliers equations arising from this problem. Here $y$ is a vector of size $m$, and its entries $y_1,\dots, y_m$ are the new dual variables introduced for the $m$ constraints in \eqref{newProblem} (see equation \eqref{Lagrangian}).
Before we state our theorem, we need the following definition.
\begin{definition} Let
\vspace{-0.2cm}
$$\psi_i = 2^{i-1}, \quad \psi_{i,j} = \sum_{k=i}^{j-1}\binom{i+j-2}k \text{ when } i < j,$$
\vspace{-0.1cm}
and
$$\psi_{i_1,\dots, i_r}= \Pf(\psi_{i_k, i_l})_{1\leq k < l\leq r} \text{ if } r \text{ is even},$$
$$\psi_{i_1,\dots, i_r}= \Pf(\psi_{i_k, i_l})_{0\leq k < l\leq r} \text{ if } r \text{ is odd}$$
where $r>2$, $\psi_{0, k } = \psi_{k}$, and $\Pf$ denotes the Pfaffian. Then, define $\delta(m, n, r)$ as
$$\delta(m, n, r) = \sum_{I} \psi_I\psi_{I^c},$$
where the sum runs over all strictly increasing subsequences $I = \{i_1, ..., i_{n-r}\}$ of $\{1, ..., n\}$
of length $n - r$ and such that $i_1 + ... + i_{n-r} = m$.
\end{definition}

\begin{theorem}\label{thm:LRSDP}
The number of critical points of the low-rank semidefinite programming algorithm \eqref{newProblem} is
$$2 (\deg\SO(r)) \delta(m, n, r).$$
\end{theorem}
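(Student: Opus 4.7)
The plan is to factor the critical-point count through the substitution $X = RR^T$, using the algebraic degree of rank-constrained semidefinite programming (due to Nie, Ranestad and Sturmfels) for the base and the degree of $\OO(r)$ for the fiber. This routes the proof through two essentially known ingredients: the Pfaffian formula defining $\delta(m,n,r)$, and Corollary~\ref{degOn}.

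First I would write down the Lagrangian
\[
\mathcal{L}(R,y) = C\bullet(RR^T) - \sum_{i=1}^{m} y_i\bigl(A_i\bullet(RR^T) - b_i\bigr),
\]
set $Z := C - \sum_{i=1}^{m} y_i A_i$, and differentiate. The condition $\nabla_R \mathcal{L} = 0$ yields $ZR = 0$, while $\nabla_y \mathcal{L} = 0$ recovers the $m$ affine constraints. Both equations depend on $R$ only through $RR^T$ and through the column span of $R$, so the critical locus is invariant under the right action $R \mapsto RQ$ for $Q \in \OO(r)$. A genericity check shows that at every critical point $R$ has full column rank; consequently the $\OO(r)$-action on the critical locus is free and the critical locus is a disjoint union of $\OO(r)$-torsors.

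Next I would descend to the variables $(X,y)$ via $X = RR^T$. The KKT system becomes $ZX = 0$ and $A_i \bullet X = b_i$, with $X$ symmetric of rank $r$ and $Z$ generically of rank $n - r$. This is precisely the system whose solutions are counted by the algebraic degree of semidefinite programming with prescribed primal rank; the closed-form Pfaffian expression of Nie, Ranestad and Sturmfels in \emph{The Algebraic Degree of Semidefinite Programming} is exactly the quantity $\delta(m,n,r)$ defined in our statement. Invoking their theorem produces $\delta(m,n,r)$ critical pairs $(X,y)$.

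To finish, I would compute the fiber of $R \mapsto RR^T$ over a fixed rank-$r$ symmetric $X$: pinning one factorization $R_0 R_0^T = X$ identifies the fiber with the torsor $\{R_0 Q : Q \in \OO(r)\}$, a variety of degree $\deg \OO(r) = 2\deg\SO(r)$ by Corollary~\ref{degOn}. Since the critical locus in $(R,y)$-space is the disjoint union of $\delta(m,n,r)$ such torsors and degree is additive on disjoint unions, the total count is $2(\deg\SO(r))\,\delta(m,n,r)$. The main obstacle I anticipate is verifying the genericity hypotheses needed to invoke Nie--Ranestad--Sturmfels as an exact equality: one must argue that for generic data $(C,A_i,b)$ the rank-$r$ stratum of KKT points is isolated, so the Pfaffian formula is sharp rather than an upper bound, and that no extra components of the $(R,y)$ critical set arise from degenerate factorizations $R$ of rank less than $r$.
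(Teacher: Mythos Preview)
Your proposal is correct and follows essentially the same route as the paper: write the Lagrangian, read off the KKT system, compare with the Nie--Ranestad--Sturmfels equations for rank-$r$ SDP to get $\delta(m,n,r)$ base points, and identify each fiber of $R\mapsto RR^T$ with a copy of $\OO(r)$, contributing the factor $2\deg\SO(r)$. Two minor remarks: the derivative in $R$ actually gives $ZR=0$ (as you write) rather than $ZRR^T=0$ as stated in the paper, though the two are equivalent once $R$ has full column rank; and the Pfaffian formula you attribute to Nie--Ranestad--Sturmfels is in fact due to von Bothmer--Ranestad, while NRS established that $\delta(m,n,r)$ counts the rank-$r$ critical points.
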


\begin{remark} The number $\delta(m, n, r)$ is called the {\em algebraic degree of semidefinite programming} and was originally defined in \cite{NRS} as the number of critical points of the original semidefinite programming problem \eqref{primalProblem} for which the matrix $X$ has rank~$r$. The final formula for it was computed in \cite{VR}.
\end{remark}

\noindent\textit{Proof of Theorem~\ref{thm:LRSDP}:} In order to analyze the optimality conditions for the program (\ref{newProblem}) for a fixed $r$, consider the Lagrangian function
\begin{align}\label{Lagrangian}
L(R, y) = C\bullet (RR^T) - \sum_{i=1}^my_i(A_i\bullet(RR^T) - b_i).
\end{align}
Taking derivatives, we find out that the critical points $(R, y)$ of this optimization problem are given by the Lagrange multipliers equations:
\begin{align}\label{equations}
\left(C - \sum_{i=1}^my_iA_i\right)RR^T&= 0\\
A_i\bullet(RR^T) &= b_i, i=1,2,...,m.\notag
\end{align}
In addition, those critical points relevant for applications have to be real and have to satisfy
\begin{align}\label{addEquation}
\left(C - \sum_{i=1}^my_iA_i\right)\succeq 0,
\end{align}
since this is the constraint in the dual to the optimization problem \eqref{newProblem}. However, in this article we are primarily concerned with counting all of the critical points.
Analogously, in \cite{NRS} Nie, Ranestad, and Sturmfels show that the critical points of the original semidefinite programming problem \eqref{primalProblem} satisfy
\begin{align}
\left(C - \sum_{i=1}^my_iA_i\right)X &= 0,\label{optimalSolutions1}\\
A_i\bullet X &= b_i, i=1,...,m.\label{optimalSolutions2}
\end{align}
In addition, the critical points relevant for applications have to satisfy
\begin{align}
\left(C - \sum_{i=1}^my_iA_i\right)\succeq 0 \text{ and } X&\succeq 0,\label{optimalSolutions3}
\end{align}
but these conditions are disregarded and the total number of critical points is counted.
Nie, Ranestad, and Sturmfels show that the number of solutions $(X, y)$ to \eqref{optimalSolutions1}-\eqref{optimalSolutions2},  for which the rank of $X$ is $r$, equals $\delta(m, n, r)$ (c.f. Definition 1). Comparing our system of equations \eqref{equations} to the equations \eqref{optimalSolutions1}-\eqref{optimalSolutions2}, we see that the fiber of the map $(R, y) \mapsto (RR^T, y)$ above each point $(X, y)$, satisfying \eqref{optimalSolutions1}-\eqref{optimalSolutions2}, consists of all points $(R, y')$, satisfying \eqref{equations}, and such that $y' = y$ and $X = RR^T$. Given $X$ and one matrix $R$ such that $X = RR^T$, all other matrices $S$ such that $(S, y)$ is in the fiber above $(X, y)$ have the form $S = RU$ where $U$ runs over all orthogonal $r\times r$ matrices.  In other words, this fiber is isomorphic to a copy of the orthogonal group $O(r)$. Therefore, the number of solutions to \eqref{equations} is equal to $2(\deg\SO(r))\delta(m, n, r)$.
\qed

The number of critical points of low-rank semidefinite programming grows rapidly with the rank $r$, and the appealing behavior of the augmented Lagrangian algorithm \cite{BM} still needs to be explained. It would be quite interesting and relevant for applications to examine how many of the critical points computed in Theorem 5 and in \cite{NRS} are real, and moreover, how many of them satisfy the additional linear matrix inequality constraints \eqref{addEquation} and \eqref{optimalSolutions3} respectively. This is a real algebraic problem and would involve counting polynomial system solutions over semialgebraic sets. This question is addressed more in Section \ref{reality}.

\section{Computational Methods}
\label{Computational}
Although we have already derived a formula for the degree of $\SO(n)$, it is natural to want to compute this degree explicitly for particular values of $n$. Aside from merely verifying the formula in Theorem \ref{degson}, the computation of this degree gives us access to other useful data along the way. In our case, this manifests itself as either a Gr\"{o}bner basis or a witness set for $\SO(n)$. Once computed, either may be used in further computations (such as those done in Section \ref{reality} using witness sets). Additionally, $\SO(n)$  serves as a prime example of when numerical algorithms are better suited for computation than other techniques. Even though our computations focus on $\SO(n)$, these methods are useful for studying many other varieties. 

In this section, we describe three techniques which can compute the degree of a variety: a Gr\"obner basis algorithm, polynomial homotopy continuation, and a numerical monodromy algorithm. The first is symbolic and the last two use numerical algebraic geometry. The results of our symbolic and numerical computations for $\deg \SO(n)$ appear in the first two columns of Table \ref{fig:SO}. Code for each method is given in the appendix.

Using Gr\"obner bases, we were able to compute the degree of $\SO(n)$ for $n \leq 5$.  The standard algorithm computes a Gr\"{o}bner basis for the ideal of $\SO(n)$ over $\mathbb{Q}$ and uses this to produce the Hilbert polynomial. However, since the dimension of $\SO(n)$ grows quadratically in $n$, this method quickly becomes computationally infeasible. Computing a Gr\"{o}bner basis over a finite field can speed up the computation, but this method is still quite slow.
 
A common numerical algorithm for computing the degree of $\SO(n)$ over $\mathbb{C}$ randomly chooses an affine linear space $\mathcal L$ of complementary dimension and counts the number of complex solutions $S$ to the zero-dimensional system corresponding to $\SO(n) \cap \mathcal L$. This data is contained in the triple $(\SO(n),\mathcal L, S)$ which is called a \emph{witness set} for $\SO(n)$. This is the fundamental data type in numerical algebraic geometry in the sense that the computation of a witness set is often a necessary step for other numerical algorithms. Such techniques include sampling points on the variety at a rapid rate, studying its asymptotic behaviour, computing its monodromy group, or even studying its real locus, as we do in Section \ref{reality}. Both numerical algorithms presented below produce a witness set for $\SO(n)$.

\emph{Polynomial homotopy continuation} computes a witness set by solving a system of polynomials describing these points. Briefly, this method begins with a ``start'' polynomial system that has similar structure to the ``target'' system we want to solve, but for which the solutions are obvious. The solutions of the start system are quickly tracked through a homotopy towards those of the target system \cite{SW}. The most basic start system one uses for this technique has a solution count equal to the product of the degrees of the polynomials in the target system. This number is called the \emph{B\'ezout bound} and for our case is equal to $2^{n(n+1)/2}$ (for $n=6$, this is already $2097152$). The \emph{polyhedral start system}, however, has a solution count equal to the mixed volume of the Newton polytopes of these polynomials. In our case, this count provides no savings as it is equal to the B\'ezout bound. Because of how many paths need to be tracked with this method, we were only able to compute the degree of $\SO(n)$ up to $n=5$ with this method, just like with Gr\"obner bases.

The method that proved to be the most efficient takes advantage of the monodromy group of $\SO(n)$. The basic idea is that if we know some point on a linear cut $W=\mathcal{L} \cap \SO(n)$, we can track this solution from the slice $W$ along some path $\gamma$ to another slice $W'$ using homotopy methods. Tracking this solution along a different path $\gamma'$ back to $W$ then induces a permutation $\sigma_{\gamma,\gamma'}$ on the points in $W$. Therefore, applying this action to a point $x_0 \in W$ will likely produce a new point $\sigma_{\gamma,\gamma'}(x_0) \in W$. One iterates this process hoping to populate the witness set associated to $W$. Other than knowing the degree {\it a priori}, stopping criteria for this method tend to be heuristic in nature: one can wait until the algorithm fails to produce new points (suggesting there are no new points to be found) or one can compute a \emph{trace test} \cite{tracetest} which numerically decides whether or not a witness set is complete. This monodromy method has been implemented in the package \emph{ monodromySolver} for \emph{Macaulay2} \cite{M2} and is explained in much more detail in \cite{duff2016solving}.

\begin{remark}
A major computational result arising from this project was the computation of witness sets for $\SO(6)$ and $\SO(7)$. This was done in $630$ and $42790$ seconds respectively using \emph{ monodromySolver}. The algorithm stopped when no new points were found on ten consecutive iterations.
\end{remark}

\section{Real Points on $\SO(n)$}
\label{reality}

An interesting question pertaining to $\SO(n)$ is whether or not this variety always admits some witness set consisting of only real points. Since tracking points of one witness set to those of another is computationally inexpensive via homotopy continuation, we use this method to generate experimental data regarding real points on witness sets of $\SO(3),$ $\SO(4),$ and $\SO(5)$.  

The number of coefficients needed to produce a linear cut of $\SO(n)$ is $(n^2+1) {{n}\choose 2}$. We randomly choose these coefficients using the \emph{random} function in \emph{Macaulay2} in order to sample linear cuts of $\SO(n)$. We then use homotopy continuation to track solutions of a precomputed witness set to those lying on the randomly chosen linear cut. Finally, we determine how many solutions in the new cut are real by checking whether each solution is within a $0.001$ numerical tolerance of a real point coordinate-wise. One can certify the results using the software \emph{alphaCertify} which implements Smale's $\alpha$ theory \cite{alphaCertified}. For the sake of speed, we chose not to certify all of the results, but instead certify at least one witness set achieving the observed maximum of real points (cf. Table \ref{fig:SO3Real}, Table \ref{fig:SO4Real}, and Table \ref{fig:SO5Real}).

After computing $1398000$, $1004100$, and $48200$ witness sets for $\SO(3),\SO(4),$ and $\SO(5)$ respectively, we have summarized the number of real solutions found in each witness set in the frequency tables and histograms below. Explicit data and code used can be found in \cite{dataSite}. Note that very rarely, numerical failures occur because the path that homotopy continuation is being performed over is ill-conditioned (for example, almost singular). These occurrences are also tallied below under ``fail''.

\begin{table}[H]
\centering
\begin{tabular}{c |c| c| c | c | c |c||c}
\hline 
\hline
\#(Real Solutions)&\bf{Fail}&\bf{0} & \bf{2} &  \bf{4} & \bf{6}&  \bf{8}&\bf{Total} \\ \hline
Frequency &2& 285676 & 420049 & 549875 & 127699 & 14699&1398000 \\ 
\hline
\hline
\end{tabular}
\caption{Number of real points on witness sets of $\SO(3)$}
\label{fig:SO3Real}
\end{table}
\begin{table}[H]
\centering
\begin{tabular}{  c | c | c | c | c | c | c | c }
\hline 
\hline
\#(Real Solutions) &{\bf Fail}& \bf 0 & \bf 2& \bf 4& \bf 6& \bf {8}& \bf {10}\\ \hline
Frequency 
&51
&183427 
&108273
&132143
&156010
&159630
&124843\\
\hline
\hline
\end{tabular}
\begin{tabular}{ |c|c|c|c|c| c | c | c | c | c | c | c | c ||c}
\hline 
\hline
 \bf {12}& \bf {14}& \bf {16}& \bf {18}& \bf {20} &\bf {22} &\bf{24}&\bf{26}&\bf{28}&\bf{30}&\bf{32}&$\cdots$&\bf{40}&\bf{Total}\\ \hline

76965
&38243
&16150
&5780
&1897
&510
&145
&23
&9
&1
&0
&$\cdots$
&0
&1004100\\ 
\hline
\hline
\end{tabular}
\caption{Number of real points on witness sets of $\SO(4)$}
\label{fig:SO4Real}
\end{table}
\begin{table}[H]
\centering
\begin{tabular}{c|c|c|c | c | c | c | c | c | c | c     }
\hline 
\hline
\#(Real Solutions) &{\bf Fail}& \bf 0 & \bf 2& \bf 4& \bf 6& \bf {8}& \bf {10}& \bf {12}& \bf {14}& \bf {16}\\ \hline
Frequency
&81
&6162
&2628
&2377
&2306
&2275
&2272
&2275
&2383
&2473\\ 
\hline
\hline
\end{tabular}
\begin{tabular}{|c|c|c|c|c|c|c|c|c|c|c|c|c|c}
\hline
\hline
 \bf {18}&  \bf {20}&  \bf {22}&  \bf {24}&  \bf {26}&  \bf {28}&  \bf {30}&  \bf {32}&  \bf {34}&  \bf {36}&  \bf {38}&  \bf {40}&  \bf {42}& \bf {44}  \\ \hline

2497
&2527
&2504
&2485
&2280
&2009
&1755
&1644
&1331
&1051
&802
&591
&468
&362
\\
 \hline
 \hline
 \end{tabular}
\begin{tabular}{  | c | c | c | c | c | c | c | c | c | c | c | c | c | c|c|c|c|c|c||c}
\hline 
\hline
  \bf {46}& \bf {48}& \bf {50}& \bf {52}& \bf {54}& \bf {56}& \bf {58}& \bf {60}& \bf {62} &\bf{64} &\bf {66}&\bf {68}&\bf {70}&\bf {72}&\bf {74}&\bf {76}&\bf {78}&$\cdots$&\bf{384}&\bf{Total}\\ \hline
235
&150
&118
&60
&44
&21
&16
&8
&4
&3
&1
&0
&0
&0
&0
&2
&0
&$\cdots$
&0
&48200\\ 
\hline
\hline
\end{tabular}
\caption{Number of real points on witness sets of $\SO(5)$}
\label{fig:SO5Real}
\end{table}

\begin{figure}\label{fig:so4histogram}
  \centering
  \includegraphics[width=0.7 \linewidth]{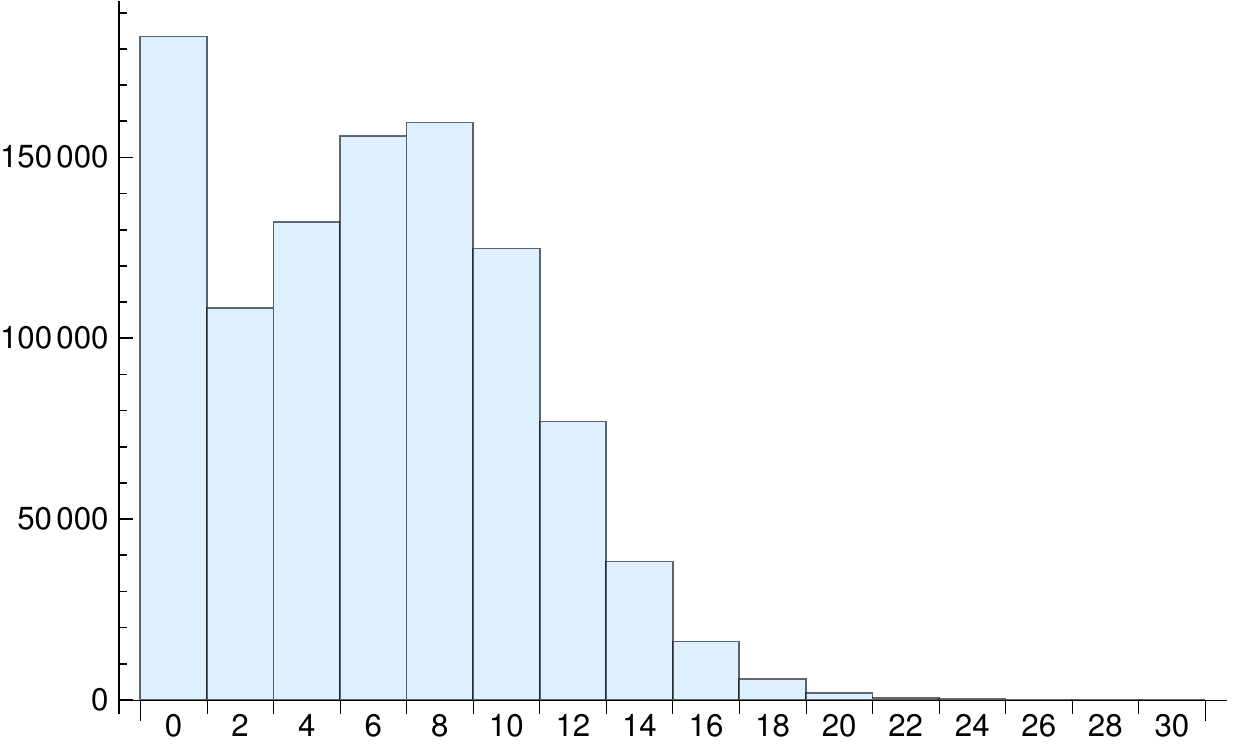}
  \caption{Histogram counting real points on witness sets of $\SO(4)$}
\end{figure}
\begin{figure}\label{fig:so5histogram}
  \centering
  \includegraphics[width=0.7 \linewidth]{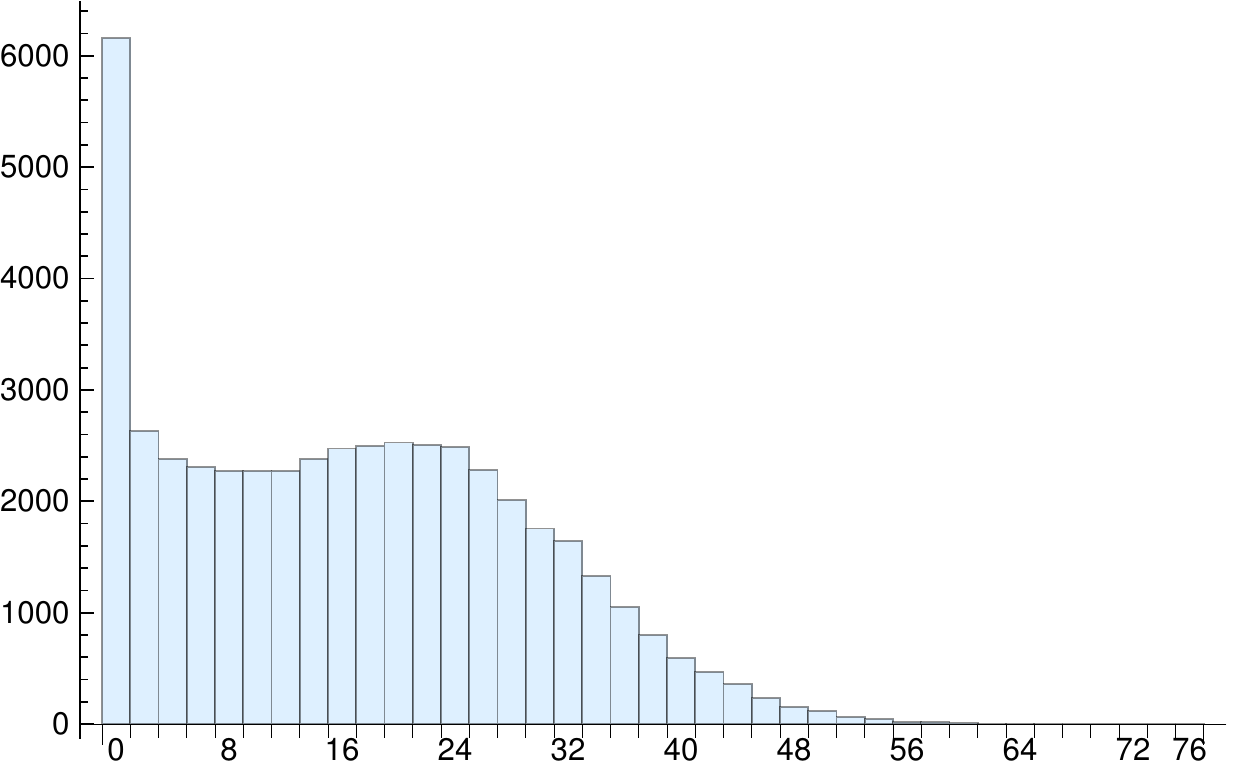}
  \caption{Histogram counting real points on witness sets of $\SO(5)$}
\end{figure}

In each case, we were able to find a witness set which failed to have any real solutions on it. This is unsurprising as $\SO(n)$ is compact over the real numbers. Despite the fact that all witness sets computed for $\SO(4)$ had fewer than $40$ solutions, and for $\SO(5)$, fewer than $384$, there is little evidence suggesting that a non-trivial upper bound for the number of real solutions on a witness set of $\SO(n)$ exists. 
We end with a conjecture.  
\begin{conjecture}
For any $n$, $\SO(n)$ admits some real witness set.
\end{conjecture}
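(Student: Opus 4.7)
The plan is to leverage both the real Lie-group structure of $\SO(n,\mathbb{R})$ and the monodromy-based homotopy techniques of Section~\ref{Computational}. The conjecture asserts the existence of a real affine subspace $\mathcal{L} \subset \mathrm{Mat}_{n,n}(\mathbb{R})$ of codimension $\dim \SO(n) = \binom{n}{2}$ meeting $\SO(n)$ in exactly $d := \deg \SO(n)$ real points. Because $\SO(n,\mathbb{R})$ is a smooth compact real manifold whose real dimension equals the complex dimension of $\SO(n)$ and which is Zariski-dense in it, generic real slices do produce some real intersections; the question is whether one can push the real count up to the full complex degree.

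First, I would establish that the set of real witness sets, if non-empty, is Zariski-open in the real Grassmannian of slices. By a real Bertini argument there is a proper real-algebraic discriminant $\Sigma$ consisting of slices where the intersection fails to be transverse, and outside $\Sigma$ the count of real intersection points is locally constant. Thus it suffices to find a single slice in the complement of $\Sigma$ with all $d$ intersection points real.

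Second, I would set up a homotopy continuation in the spirit of the monodromy solver of Section~\ref{Computational}. Starting from a generic real slice $\mathcal{L}_0$ achieving $k_0 > 0$ real intersections (the positive data of Tables~\ref{fig:SO3Real}--\ref{fig:SO5Real} confirms this is typical), I would follow a one-parameter family $\mathcal{L}_t$ of real slices that crosses $\Sigma$ transversally at simple nodes. Each such crossing changes the real count by $\pm 2$, as a complex conjugate pair collides and either becomes two real points or vice versa. The goal is to choreograph the path so every crossing is a $+2$ event, terminating at a slice with $d$ real points. The group action $(g,h)\cdot M = gMh^{-1}$ of $\SO(n,\mathbb{R})\times \SO(n,\mathbb{R})$ preserves $\SO(n)$ and acts on the space of real slices, giving a natural reservoir of deformations to search over; one might also fix a flag of real subgroups $\SO(2)\subset \SO(3) \subset \cdots$ and build $\mathcal{L}$ inductively so that each inclusion contributes maximally real new intersections.

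The main obstacle, and the reason this remains open, is controlling the sign of each crossing of $\Sigma$. For arbitrary paths the signs are unconstrained, so a more structured approach is likely needed. Two promising directions are: (i) an explicit real construction, for instance via Cayley transform or Euler-angle coordinates, that produces a slice adapted so as to place one real intersection per non-intersecting lattice path of Theorem~\ref{non-intersecting}; or (ii) a reality theorem in the spirit of Mukhin--Tarasov--Varchenko for Schubert calculus, identifying the Kazarnovskij integral of Theorem~\ref{Kazarnovskij} with the count of a positive real enumerative problem for which maximal reality is automatic. Bridging the combinatorial lattice-path interpretation of Section~\ref{NILP} to an explicit geometric construction of $d$ real points appears to be the decisive step, and is where I expect the real difficulty of the conjecture to lie.
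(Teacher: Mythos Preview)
The statement you are attempting to prove is a \emph{conjecture} in the paper; the authors offer no proof and explicitly leave it open. So there is nothing to compare your attempt against.

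That said, what you have written is not a proof but a research plan, and you acknowledge this yourself: you identify ``controlling the sign of each crossing of $\Sigma$'' as the main obstacle and then list two speculative directions (an explicit Cayley/Euler-angle construction, or an MTV-style reality theorem) without carrying out either. Neither the homotopy argument nor the inductive flag idea is brought to a conclusion. As it stands, the proposal contains no step that actually produces a slice with $\deg \SO(n)$ real points, so it does not establish the conjecture.

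A couple of smaller issues are also worth flagging. First, the set of fully real slices is open in the Euclidean topology on the complement of the discriminant, not Zariski-open; reality of all $d$ roots is a semialgebraic, not algebraic, condition. Second, your assertion that ``generic real slices do produce some real intersections'' is contradicted by the paper's own data: Tables~\ref{fig:SO3Real}--\ref{fig:SO5Real} show that a substantial fraction of random real slices meet $\SO(n)$ in \emph{zero} real points (over $20\%$ for $\SO(3)$). Compactness of $\SO(n,\RR)$ in fact makes it easy for a real affine slice to miss the real locus entirely, so your starting hypothesis $k_0>0$ already requires a choice, not genericity.
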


\section*{Appendix: Macaulay2 Code}
This section contains code which computes the degree of $\SO(n)$ for various $n$ using Gr\"obner bases, polynomial homotopy continuation, \emph{MonodromySolver}, and Theorem~\ref{degson} respectively. They are all done in \emph{Macaulay2}. 

First, we compute the degree of $\SO(5)$  using Gr\"{o}bner bases. The computation is done over the finite field $\mathbb{Z}_{101}$ for $\OO(5)$ and the result is halved to give the degree of $\SO(5)$.
\begin{verbatim}
n=5
R = ZZ/101[x_(1,1)..x_(n,n)]
M = genericMatrix(R,n,n)
J = minors(1,M*transpose(M)-id_(R^n))
degOn = degree J
degSOn = degOn//2
\end{verbatim}
Computing the degree of $\OO(n)$, rather than $\SO(n)$ directly, is useful because it throws out the polynomial of highest degree in the system. This is especially useful in numerical methods since they perform best with polynomials of low degree.

The code below uses the package \emph{NumericalAlgebraicGeometry} to solve the zero dimensional system given by a linear slice of $\OO(3)$. The method \emph{solveSystem} employs the standard method of polynomial homotopy continuation. Again, the answer is halved to give $\deg \SO(3)$.
\begin{verbatim}
loadPackage "NumericalAlgebraicGeometry"
n = 3
L = toList apply(
      (0,0)..(n-1,n-1), (i,j)->"x"|toString i|toString j
      )
R = CC[L]
M = genericMatrix(R,n,n)
B = M*(transpose M) - id_(R^n)
polys = flatten for i from 0 to n-1 list(
          for j from i to n-1 list B_(i,j)
        )
linearSlice = apply(
                binomial(n,2), i->random(1,R)-random(CC)
              )
S = solveSystem(polys|linearSlice);
degOn = #S
degSOn = degOn//2
\end{verbatim}

Next, we provide code that computes the degree of $\SO(7)$ using the package \emph{MonodromySolver}. We again do not include the determinant condition, but this time we do \emph{not} need to halve the result. This is because our starting point, the identity matrix, lies on $\SO(7)$ and this method only discovers points on the irreducible component corresponding to our starting point. The linear slices are parametrized by the $t$ and $c$ variables which are varied within the function \emph{monodromySolve} to create monodromy loops. The method stops when ten consecutive loops provide no new points. Although it is possible that this stopping criterion is satisfied prematurely, in our case the program stopped at the correct number, serving as a testament to the practicality of the software and also this stopping criterion.

\begin{verbatim}
loadPackage "MonodromySolver"
N=7
d=binomial(N,2)
R=CC[c_1..c_d,t_(1,1,1)..t_(d,N,N)][x_(1,1)..x_(N,N)]
M=genericMatrix(R,N,N)
B=M*transpose(M)-id_(R^N)
polys=flatten for j from 0 to N-1 list(
       for k from j to N-1 list B_(j,k)
      );
linearSlice=for i from 1 to d list(
              c_i+sum(
                flatten for j from 1 to N list(
                  for k from 1 to N list t_(i,j,k)*x_(j,k)
                )
              )
            );
G = polySystem join (polys,linearSlice)
x0coords = flatten entries id_(CC^N)
setRandomSeed 0
(p0, x0) := createSeedPair(G,x0coords)
elapsedTime (V,npaths) = 
	monodromySolve(G,p0,{x0},NumberOfNodes=>2,NumberOfEdges=>4);
--node1: 111616                                                                                                                
--node2: 111616                                                                                                                
-- 42790.9 seconds elapsed    
\end{verbatim}

Finally, for the mathematician wanting to compute the degree of $\SO(n)$ quickest, we give code that evaluates the formula in Theorem \ref{degson}.
\begin{verbatim}
degSO = method()
degSO(ZZ) := N ->(
    n := N//2;
    M := matrix for i from 1 to n list (
        for j from 1 to n list (
            binomial(2*N-2*i-2*j,N-2*i)
            )
        );
    2^(N-1)*(det M)
    )
\end{verbatim}

\begin{acknowledgement}
This article was initiated during the Apprenticeship Weeks (22 August-2 September 2016), led by Bernd Sturmfels, as part of the Combinatorial Algebraic Geometry Semester at the Fields Institute.

The authors are very grateful to Jan Draisma for his tremendous help with understanding Kazarnovskij's formula, and to Kristian Ranestad for many helpful discussions. The authors thank Anton Leykin for performing the computation of $\SO(7)$. 
The first three authors would also like to thank the Max Planck Institute for Mathematics in the Sciences in Leipzig, Germany for their hospitality where some of this article was completed. The motivation for computing the degree of the orthogonal group came from project that started by the fifth author at the suggestion of Benjamin Recht.

The first author was supported by the National Science Foundation Graduate Research Fellowship under Grant No. DGE 1106400. The second author was partially supported by the NSF GRFP under Grant No. DGE-1256259 and the Graduate School and the Office of the Vice Chancellor for Research and Graduate Education at the University of Wisconsin-Madison with funding from the Wisconsin Alumni Research Foundation.
\end{acknowledgement}

\bibliographystyle{amsalpha}
\bibliography{ref}
\end{document}